\date{}
\theoremstyle{plain}
\newtheorem{theorem}{Theorem}[section]
\newtheorem{definition}{Definition}[section]
\newtheorem{remark}{Remark}[section]
\numberwithin{equation}{section}
\def\R{\hbox{{\rm I}\kern-0.2em{\rm R}\kern0.2em}}%mathematical R for reals
\def\bn{\begin{equation}}
\def\en{\end{equation}}
\def\bny{\begin{eqnarray}}
\def\eny{\end{eqnarray}}
\def\be{\begin{eqnarray*}}
\def\ee{\end{eqnarray*}}
\def\bc{\begin{center}}
\def\ec{\end{center}}
\def\({\left(}
\def\){\right  )}
\def\[{\left[}
\def\]{\right]}
\def\bc{\begin{center}}
\def\ec{\end{center}}
\def\bn{\begin{equation}}
\def\en{\end{equation}}
\def\bny{\begin{eqnarray}}
\def\eny{\end{eqnarray}}
\def\be{\begin{eqnarray*}}
\def\ee{\end{eqnarray*}}
\def\bdn{\begin{dfn}}
\def\edn{\end{dfn}}
\def\btm{\begin{thm}}
\def\etm{\end{thm}}
\def\bpf{\begin{proof}}
\def\epf{\end{proof}}
\def\bpn{\begin{pro}}
\def\epn{\end{pro}}
\def\brk{\begin{rem}}
\def\erk{\end{rem}}
\def\bcy{\begin{cor}}
\def\ecy{\end{cor}}
\def\blm{\begin{lem}}\def\elm{\end{lem}}
\def\bex{\begin{exm}}
\def\eex{\end{exm}}
 \def\R{{\hat R}}
\begin{document}

\bc {\bf A simplification and generalization of Elsayed's two-dimensional system of third order difference equations.
  }\ec
\medskip
\bc
M. Folly-Gbetoula and D. Nyirenda %\footnote{Mensah.Folly-Gbetoula@wits.ac.za}
 \vspace{1cm}
\\School of Mathematics, University of the Witwatersrand, Johannesburg, South Africa.\\
 %\\%e-mails: Mensah.Folly-Gbetoula@wits.ac.za, Darlison.Nyirenda@wits.ac.za \\

\ec
\begin{abstract}
%\vspace{8cm}
\noindent  A full Lie analysis of a  system of third-order difference equations
%\begin{align*}
%x_{n+1}=\frac{x_{n-2}y_{n-1}}{y_{n}(a_n+b_nx_{n-2}y_{n-1})}, \quad y_{n+1}=\frac{y_{n-2}x_{n-1}}{x_{n}(c_n+d_ny_{n-2}x_{n-1})}
%\end{align*}  where $(a_n)_{n\in \mathbb{N}_0},\;(b_n)_{n\in \mathbb{N}_0}$ are non-zero real sequences with $x_{-2}, x_{-1}, x_{0}, y_{-2}, y_{-1}$ and $y_{0}$ as initial values
is performed. Explicit solutions, expressed in terms of the initial values, are derived. Furthermore, we give sufficient conditions for existence of 2-periodic and 4-periodic solutions in certain cases. Our results generalize and simplify the work by Elsayed and Ibrahim [E. M. Elsayed and T.F. Ibrahim, Periodicity and solutions for some systems of nonlinear rational difference equations, \textit{Hacettepe Journal of Mathematics and Statistics} \textbf{44:6} (2015), 1361-1390].
%We have extended the work of Hydon in \ref{h1}.
\end{abstract}
\textbf{Key words}: Difference equation; symmetry; reduction; group invariant solutions, periodicity.%\newpage
\section{Introduction} \setcounter{equation}{0}
The area of difference equations has attracted many researchers recently. Methods for solving difference equations have been developed ( see \cite{2,3,4, hydon0, Mensah, Mensah2}) and Lie symmetry approach is one of them. One of the most useful algorithms for computing symmetries of difference equations is due to Hydon (see \cite{hydon0}). The Lie symmetry group of a system of difference equations is the largest group of point transformations acting on the space of dependent and independent variables that leave the equations unchanged. Thus an element of such a group, maps a solution of the difference equation onto another solution.  In this method, the order of the difference equation is reduced and using the invariance of the equation under group transformations or via the similarity variables, one can find exact solutions.\par \noindent
In this paper, by applying Lie symmetry method, we generalize some results in \cite{EI}, where Elsayed and Ibrahim investigated the periodic nature and the form of the solutions of nonlinear difference equations systems of order three:
\begin{align}\label{1.0}
x_{n+1}=\frac{ x_{n-2}y_{n-1}}{y_n(\pm 1\pm 1 x_{n-2}y_{n-1})}, y_{n+1}=\frac{ y_{n-2}x_{n-1}}{x_n(\pm 1\pm 1 y_{n-2}x_{n-1})}.
\end{align}
We study the system
\begin{align}\label{1.1}
x_{n+1}=\frac{x_{n-2}y_{n-1}}{y_{n}(a_n+b_nx_{n-2}y_{n-1})}, \quad y_{n+1}=\frac{y_{n-2}x_{n-1}}{x_{n}(c_n+d_ny_{n-2}x_{n-1})},
\end{align}  where $(a_n)_{n\in \mathbb{N}_0},\;(b_n)_{n\in \mathbb{N}_0}$ are non-zero real sequences, $x_{-2},\,x_{-1},\,x_0, y_{-2},\,y_{-1}$ and $y_0$ are initial values. Because of the definitions and notation we want to use, we study the equivalent system
\begin{align}\label{1.2}
x_{n+3}=\frac{x_ny_{n+1}}{y_{n+2}(A_n+B_nx_ny_{n+1})}, \quad y_{n+3}=\frac{y_nx_{n+1}}{x_{n+2}(C_n+D_ny_nx_{n+1})},
\end{align}  where $(A_n)_{n\in \mathbb{N}_0},\;(B_n)_{n\in \mathbb{N}_0}$ are non-zero real sequences.\par \noindent
In coming up with the solutions of (\ref{1.1}) using Lie symmetry method, we first find the Lie algebra of \eqref{1.1}.  We then reduce the order of the equations by utilizing the invariants, and later use iterations to deduce the solutions.
\subsection{Preliminaries}
In this section, we give a background to Lie symmetry analysis. The notation used comes from \cite{hydon0}.
\begin{definition}\cite{P.Olver} Let $G$ be a local group of transformations acting on a manifold $M$. A subset $\mathcal{S}\subset M$ is called G-invariant, and $G$ is called symmetry group of $\mathcal{S}$, if whenever $x\in \mathcal{S} $, and $g\in G$ is such that $g\cdot x$ is defined, then $g\cdot x \in \mathcal{S}$.
\end{definition}
\begin{definition}\cite{P.Olver} Let $G$ be a connected group of transformations acting on a manifold $M$. A smooth real-valued function $\zeta: M\rightarrow \mathbb{R}$ is an invariant function for $G$ if and only if $$X(\zeta)=0\qquad \text { for all } \qquad  x\in M,$$
and every infinitesimal generator $X$ of $G$.
\end{definition}
\begin{definition}\cite{hydon0}
A parameterized set of point transformations,
\begin{equation}
\Gamma_{\varepsilon} :x\mapsto \hat{x}(x;\varepsilon),
\label{eq: b}
\end{equation}
where $x=x_i, $ $i=1,\dots,p$ are continuous variables, is a one-parameter local Lie group of transformations if the following conditions are satisfied:
\begin{enumerate}
\item $\Gamma_0$ is the identity map if $\hat{x}=x$ when $\varepsilon=0$
\item $\Gamma_a\Gamma_b=\Gamma_{a+b}$ for every $a$ and $b$ sufficiently close to 0
\item Each $\hat{x_i}$ can be represented as a Taylor series (in a neighborhood of $\varepsilon=0$ that is determined by $x$), and therefore
\end{enumerate}
\begin{equation}
\hat{x_i}(x:\varepsilon)=x_i+\varepsilon \xi _i(x)+O(\varepsilon ^2), i=1,...,p.
\label{eq: c}
\end{equation}
\end{definition}
Consider the system of ordinary difference equations
\begin{align}\label{general}
\begin{cases}
%\begin{align}
x_{n+3}=&\Omega _1(x_n, x_{n+1}, x_{n+2}, y_n, y_{n+1}, y_{n+2}), \\
y_{n+3}=&\Omega _2(x_n, x_{n+1}, x_{n+2}, y_n, y_{n+1}, y_{n+2}), \quad n\in D
%\end{align}
\end{cases}
\end{align}
for some smooth function $\Omega = (\Omega_1, \Omega_2)$ and a regular domain $D\subset \mathbb{Z}$.
To find a symmetry group of \eqref{general}, we consider the group of point transformations given by
\begin{equation}\label{Gtransfo}
G_{\varepsilon}: (x_n,y_n) \mapsto(x_n+\varepsilon Q_1 (n,x_n),y_n+\varepsilon Q_2 (n,y_n)),
\end{equation}
where $\varepsilon$ is the parameter and $Q_i,\; i=1,2$, the continuous functions which we shall refer to as characteristics. Let
\begin{align}\label{Ngener}
\mathcal{X}= & Q_1(n,x_n)\frac{\partial}{ \partial x_n}+ Q_2(n,y_n)\frac{\partial}{ \partial y_n}
\end{align}
be the corresponding infinitesimal of $G _{\varepsilon}$ with the $k$-th extension
\begin{equation}
X=
Q_1\frac{\partial}{ \partial x_n}+ Q_2\frac{\partial}{ \partial y_n}+ SQ_1\frac{\partial}{ \partial x_{n+1}} +SQ_2\frac{\partial}{ \partial y_{n+1}} + S^2Q_1\frac{\partial}{ \partial x_{n+2}} +S^2Q_2\frac{\partial}{ \partial y_{n+2}}.
\end{equation}
Note that $S$ is the forward shift operator, acting on $n$ as follows: $S:n \rightarrow n+1$. Further, the linearized symmetry conditions are given by
%\begin{subequations}\label{LSC}
\begin{align}\label{LSC}
\mathcal{S}^{(3)} Q_1- X \Omega _1=0,\quad 
\mathcal{S}^{(3)} Q_2- X \Omega _2=0.
\end{align}
%\end{subequations}
 Once the characteristics $Q_i$ are known, the invariant $\zeta  _i$ may be obtained  by introducing the canonical coordinate \cite{JV}
 \begin{align}\label{cano}
s_n= \int{\frac{dx_n}{Q_1(n,x_n)}} \quad \text{ and }\quad t_n= \int{\frac{dy_n}{Q_2(n,y_n)}} .
 \end{align}
In general, the constraints on the constants in the characteristics give one a clear idea (without any lucky guess) about the perfect choice of invariants.
\section{Symmetries, reductions and exact solutions of \eqref{1.2}}
Consider the system \eqref{1.2}, that is,
\begin{align}\label{1.2'}
%\begin{cases}
x_{n+3}=\frac{x_ny_{n+1}}{y_{n+2}(A_n+B_nx_ny_{n+1})}, \quad %\\ \\
 y_{n+3}=\frac{y_nx_{n+1}}{x_{n+2}(C_n+D_ny_nx_{n+1})}.
%\end{cases}
\end{align}
\subsection{Symmetries}
To get the symmetries, we impose the infinitesimal criterion of invariance \eqref{LSC} to get
\begin{subequations}\label{a1A1}
\begin{align}\label{a1}
%\begin{split}
& (S^{3}Q_1 )+ \frac{(B_n {x_n}y_{n+1}+A_n){x_n}y_{n+1}(S^2Q_2)-A_n {x_n}y_{n+2}(SQ_2)-A_ny_{n+1}y_{n+2}Q_1}{(B_n {x_n} {y_{n+1}} + A_n)^2}=0,%\\
%-\frac{A_n {x_n}y_{n+2}}{(B_n {x_n} {y_{n+1}} + A_n)^2}(SQ_2)
%\\
%& +\frac{A_n {x_n}y_{n+1}}{(B_n {x_n} {y_{n+1}} + A_n)^2}(S^2Q_2)-
%\frac{A_ny_{n+1}y_{n+2}}{(B_n {x_n} {y_{n+1}} + A_n)^2}Q_1=0,
%\end{split}
\end{align}
\begin{align}\label{A1}
%\begin{split}
&(S^{3}Q_2 )+ \frac{(D_n {y_n}x_{n+1}+C_n){y_n}x_{n+1}(S^2Q_1)-C_n {y_n}x_{n+2}(SQ_1)-C_nx_{n+1}x_{n+2}Q_2}{(D_n {y_n} {x_{n+1}} + C_n)^2}=0.%\\
%& (S^{3}Q_2 )+ \frac{D_n {y_n}^{2}x_{n+1}^2}{(C_n+D_ny_nx_{n+1})^2}(S^2Q_1)-
%\frac{C_n {y_n}x_{n+2}}{(C_n+D_ny_nx_{n+1})^2}(SQ_1)\\
%& +\frac{C_n {y_n}x_{n+1}}{(C_n+D_ny_nx_{n+1})^2}(S^2Q_1)-
%\frac{C_nx_{n+1}x_{n+2}}{(C_n+D_ny_nx_{n+1})^2}Q_2=0.
%\end{split}
\end{align}
\end{subequations}
These are functional equations for the characteristics $Q_i,\;i=1,2$. To eliminate the first undesirable arguments $x_{n+3}$ and $y_{n+3}$, we act the differential operator
$\frac{\partial}{\partial x_n}-\frac{y_{n+1}}{x_n}\frac{\partial\quad}{\partial y_{n+1}}$
on \eqref{a1} and
$\frac{\partial}{\partial y_n}-\frac{x_{n+1}}{y_n}\frac{\partial\quad}{\partial x_{n+1}}$
on  \eqref{A1}, the following expressions are obtained after simplification:
\begin{subequations}\label{a3A3}
\begin{align}\label{a3}
y_{n+1}(SQ_2)'-y_{n+1}Q_1'-SQ_2+\frac{y_{n+1}}{x_n}Q_1=0
\end{align}
and
\begin{align}\label{A3}
x_{n+1}(SQ_1)'-x_{n+1}Q_2'-SQ_1+\frac{x_{n+1}}{y_n}Q_2=0.
\end{align}
\end{subequations}
To eliminate the second undesirable arguments $x_{n+1}$ and $y_{n+1}$, we differentiate \eqref{a3} with respect to $x_n$ and differentiate \eqref{A3} with respect to $y_n$. Solving the resulting differential equations for $Q_1$ and $Q_2$ gives
\begin{subequations}\label{a5A5}
\begin{align}\label{a5}
Q_1(n,x_n)=\alpha_n x_n + \beta _n x_n \ln x_n
\end{align}
and
\begin{align}\label{A5}
Q_2(n,y_{n})=\lambda _n y_n + \mu _n y_n\ln y_n,
\end{align}
\end{subequations}
where $\alpha _n,\; \beta _n,\; \lambda _n $ and $\mu _n$ are arbitrary functions of $n$. We gain more information on these functions by substituting equations in \eqref{a5A5} in equations in \eqref{a1A1}. The resulting equations can solved by the method of separation which yields the following systems:
\begin{subequations}\label{a8A8}
\begin{align}\label{a8}
\begin{split}
%\begin{cases}
y_{n+1}{x_n}^2&: \lambda _{n+2} +\alpha _{n+3}=0\\
x_n&: \lambda _{n+2} +\alpha _{n+3}-\alpha _n -\lambda _{n+1} =0
%\end{cases}
\end{split}
\end{align}
and
\begin{align}\label{A8}
%\begin{cases}
\begin{split}
x_{n+1}{y_n}^2&: \beta _{n+2} +\lambda _{n+3}=0\\
y_n&: \beta _{n+2} +\lambda _{n+3}-\lambda _n -\alpha _{n+1} =0
%\end{cases}
\end{split}
\end{align}
\end{subequations}
or simply
%\begin{subequations}\label{a9A9}
\begin{align}\label{a9A9}%\label{a9}
\lambda _n + \alpha _{n+1} =0,\quad 
%\end{align}
\text { and } \quad 
%\begin{align}\label{A9}
\alpha _n +\lambda _{n+1} =0.
\end{align}
%\end{subequations}
It turned out that $\beta _n$ and $\mu _n$ are zero.
\noindent
From \eqref{a8A8}, we can see that $\lambda_{n+2}-\lambda _n =0$. Thus the solutions of \eqref{a9A9}  are given by
$
\alpha_ n = -\lambda _n =(-1)^n
$ and therefore the characteristics are as follows
\begin{align}\label{a10}
Q_1= & (-1)^n x_n,\quad  Q_2=-(-1)^ny_n.
\end{align}
The Lie algebra of \eqref{1.1} is then spanned by
\begin{align}\label{a10}
\mathcal{X}= & (-1)^n x_n\frac{\partial}{ \partial x_n}-(-1)^n y_n\frac{\partial}{ \partial y_n}.
\end{align}
\subsection{Reduction and solutions}
Using \eqref{cano} and \eqref{a10}, we found that the canonical coordinates are given by
\begin{align}\label{11}
s_n =(-1)^n\ln |x_n | \quad \text{and} \quad t_n=(-1)^{n+1}\ln| y_n|.
\end{align}
We replace $\alpha _n$ and its shift (resp $\lambda _n$ and its shift) with $s _n \alpha _n $ and its shift (resp $t_n \lambda _n $ and its shift) in \eqref{a9A9} and the left hand sides of the resulting equations give the invariants:
\begin{align}\label{12}
\tilde{U}_n=\ln|x_ny_{n+1}| \quad \text{and} \quad \tilde{V}_n=\ln|x_{n+1}y_n|.
\end{align}
The reader can verify that $X[\tilde{U}_n]=X[\tilde{V}_n]=0$. For the sake of convenience, we consider
\begin{align}\label{13}
U_n =\exp\{-\tilde{U}_n\}\quad \text{and} \quad
V_n=\exp\{-\tilde{V}_n\}
\end{align}
instead or simply $U_n=\pm1/(x_ny_{n+1})$ and $V_n=\pm1/(x_{n+1}y_n)$. Using the plus sign, this leads to
\begin{align}\label{14}
V_{n + 2} = A_{n}U_{n} + B_{n} \,\,\text{and}\,\, U_{n+2} = C_{n}V_{n} + D_{n}.
\end{align}
For equations in \eqref{14} , replace $V_{n}$ in the second equation by $A_{n-2}U_{n - 2} + B_{n - 2}$ to get
$U_{n+2} = C_{n}A_{n - 2}U_{n - 2} + C_{n}B_{n - 2} + D_{n}$ which implies
\begin{align}\label{15}
 U_{n+4} = C_{n + 2}A_{n}U_{n} + C_{n + 2}B_{n} + D_{n + 2}.
\end{align}
Iterating several times, one obtains
\begin{align}\label{16}
U_{4n + j} = U_{j}\prod\limits_{\mathclap{k_1 = 0}}^{\mathclap{n - 1}} A_{4k_1 + j}C_{4k_1 + j + 2} + \sum\limits_{l = 0}^{n - 1}\left((B_{4l + j}C_{4l + j + 2} + D_{4l + j + 2})\prod\limits_{\mathclap{k_2 = l + 1}}^{\mathclap{n - 1}}A_{4k_2 + j}C_{4k_2 + j + 2} \right)
\end{align}
where $j = 0,1,2,3$.
Similarly, we have
\begin{align}\label{17}
V_{4n + j} & = V_{j}\prod\limits_{\mathclap{k_1 = 0}}^{\mathclap{n - 1}} A_{4k_1 + j + 2}C_{4k_1 + j} + \sum\limits_{l = 0}^{n - 1}\left((A_{4l + j + 2}D_{4l + j} + B_{4l + j + 2})\prod\limits_{\mathclap{k_2 = l + 1}}^{\mathclap{n - 1}}A_{4k_2 + j + 2}C_{4k_2 + j} \right)
\end{align}
where  $0\leq j \leq 3$.\\
\noindent The equations  $U_n= 1/(x_ny_{n+1})$ and $V_n= 1/(x_{n+1}y_n)$ imply that
\begin{align}\label{18}
x_{n + 2} = \frac{U_{n}}{V_{n+1}}x_{n}\quad \text{and}\quad y_{n + 2} = \frac{V_{n}}{U_{n+1}}y_{n}
\end{align}
which yield
\begin{align}\label{19}
x_{2n + j} = x_{j}\prod_{i = 0}^{n - 1}\frac{ U_{2i + j}}{V_{2i + j + 1}}\quad \text{and}\quad y_{2n + j} = y_{j}\prod_{i = 0}^{n - 1}\frac{V_{2i + j}}{U_{2i + j + 1}}
\end{align}
where $j = 0, 1$.\\
Hence, we have
\begin{align}\label{20a}
x_{4n + j} & = x_{j}\prod_{i = 0}^{n - 1}\frac{ U_{4i + j}U_{4i + 2 + j}}{V_{4i + j + 1}V_{4i + j + 3}}
\end{align}
and
\begin{align}\label{20b}
y_{4n + j} = y_{j}\prod_{i = 0}^{n - 1}\frac{V_{4i + j} V_{4i + 2 + j} }{U_{4i + j + 1}U_{4i + j + 3}}
\end{align}
where $j = 0,1,2,3$.\\
\noindent Substituting specific values of $j$ and using \eqref{20a} and \eqref{20b}, we have;
\begingroup\makeatletter\def\f@size{9}\check@mathfonts
\def\maketag@@@#1{\hbox{\m@th\large\normalfont#1}}%
\begin{align}\label{20a0}
x_{4n} & = x_{0}\prod_{s = 0}^{n - 1}\frac{ U_{4s}U_{4s + 2}}{V_{4s + 1}V_{4s + 3}} \nonumber\\
       & = x_{0}\prod_{s = 0}^{n - 1}\frac{ U_{0}\prod\limits_{k_1 = 0}^{s - 1} A_{4k_1}C_{4k_1 + 2} + \sum\limits_{l = 0}^{s - 1}\left((B_{4l}C_{4l + 2} + D_{4l  + 2})\prod\limits_{k_2 = l + 1}^{s - 1}A_{4k_2 }C_{4k_2 + 2} \right) }{ V_{1}\prod\limits_{k_1 = 0}^{s - 1} A_{4k_1 + 3}C_{4k_1 + 1} + \sum\limits_{l = 0}^{s - 1}\left((A_{4l + 3}D_{4l + 1} + B_{4l + 3})\prod\limits_{k_2 = l + 1}^{s - 1}A_{4k_2 + 3}C_{4k_2 + 1} \right)}\nonumber \\
       & \quad \times \frac{U_{2}\prod\limits_{k_1 = 0}^{s - 1} A_{4k_1 + 2}C_{4k_1 + 4} + \sum\limits_{l = 0}^{s - 1}\left((B_{4l + 2}C_{4l + 4} + D_{4l + 4})\prod\limits_{k_2 = l + 1}^{s - 1}A_{4k_2 + 2}C_{4k_2 + 4} \right)}{V_{3}\prod\limits_{k_1 = 0}^{s - 1} A_{4k_1 + 5}C_{4k_1 + 3} + \sum\limits_{l = 0}^{s - 1}\left((A_{4l + 5}D_{4l + 3} + B_{4l + 5})\prod\limits_{k_2 = l + 1}^{s - 1}A_{4k_2 + 5}C_{4k_2 + 3} \right)},
\end{align}
\begin{align}\label{20b0}
y_{4n} & = y_{0}\prod_{s = 0}^{n - 1}\frac{ V_{0}\prod\limits_{k_1 = 0}^{s - 1} A_{4k_1 + 2}C_{4k_1} + \sum\limits_{l = 0}^{s - 1}\left((A_{4l + 2}D_{4l} + B_{4l + 2})\prod\limits_{k_2 = l + 1}^{s - 1}A_{4k_2 + 2}C_{4k_2} \right)}{ U_{1}\prod\limits_{k_1 = 0}^{s - 1} A_{4k_1 + 1}C_{4k_1 + 3} + \sum\limits_{l = 0}^{s - 1}\left((B_{4l + 1}C_{4l + 3} + D_{4l + 3})\prod\limits_{k_2 = l + 1}^{s - 1}A_{4k_2 + 1}C_{4k_2 + 3} \right)}\nonumber\\
& \quad \times \frac{V_{2}\prod\limits_{k_1 = 0}^{s - 1} A_{4k_1 + 4}C_{4k_1 + 2} + \sum\limits_{l = 0}^{s- 1}\left((A_{4l + 4}D_{4l + 2} + B_{4l + 4})\prod\limits_{k_2 = l + 1}^{s - 1}A_{4k_2 + 4}C_{4k_2 + 2} \right)}{U_{3}\prod\limits_{k_1 = 0}^{s - 1} A_{4k_1 + 3}C_{4k_1 + 5} + \sum\limits_{l = 0}^{s - 1}\left((B_{4l + 3}C_{4l + 5} + D_{4l + 5})\prod\limits_{k_2 = l + 1}^{s - 1}A_{4k_2 + 3}C_{4k_2 + 5} \right)}.
\end{align}
\begin{align}\label{20a1}
x_{4n + 1} & = x_{1}\prod_{s = 0}^{n - 1}\frac{ U_{4s + 1}U_{4s + 3}}{V_{4s + 2}V_{4s + 4}}\nonumber \\
           & = x_{1}\prod_{s = 0}^{n - 1}\frac{ U_{1}\prod\limits_{k_1 = 0}^{s - 1} A_{4k_1 + 1}C_{4k_1 + 3} + \sum\limits_{l = 0}^{s - 1}\left((B_{4l + 1}C_{4l + 3} + D_{4l + 3})\prod\limits_{k_2 = l + 1}^{s - 1}A_{4k_2 + 1}C_{4k_2 + 3} \right)}{V_{2}\prod\limits_{k_1 = 0}^{s - 1} A_{4k_1 + 4}C_{4k_1 + 2} + \sum\limits_{l = 0}^{s- 1}\left((A_{4l + 4}D_{4l + 2} + B_{4l + 4})\prod\limits_{k_2 = l + 1}^{s - 1}A_{4k_2 + 4}C_{4k_2 + 2} \right)} \nonumber\\
           & \quad \times \frac{ U_{3}\prod\limits_{k_1 = 0}^{s - 1} A_{4k_1 + 3}C_{4k_1 + 5} + \sum\limits_{l = 0}^{s - 1}\left((B_{4l + 3}C_{4l + 5} + D_{4l + 5})\prod\limits_{k_2 = l + 1}^{s - 1}A_{4k_2 + 3}C_{4k_2 + 5} \right)}{V_{0}\prod\limits_{k_1 = 0}^{s} A_{4k_1 + 2}C_{4k_1} + \sum\limits_{l = 0}^{s}\left((A_{4l + 2}D_{4l} + B_{4l + 2})\prod\limits_{k_2 = l + 1}^{s}A_{4k_2 + 2}C_{4k_2} \right)    },
\end{align}
\begin{align}\label{20b1}
y_{4n + 1}  & = y_{1}\prod_{i = 0}^{n - 1}\frac{V_{4s + 1} V_{4s + 3} }{U_{4s + 2}U_{4s + 4}} \nonumber\\
            & = y_{1}\prod_{i = 0}^{n - 1}\frac{ V_{1}\prod\limits_{k_1 = 0}^{s - 1} A_{4k_1 + 3}C_{4k_1 + 1} + \sum\limits_{l = 0}^{s - 1}\left((A_{4l + 3}D_{4l + 1} + B_{4l + 3})\prod\limits_{k_2 = l + 1}^{s - 1}A_{4k_2 + 3}C_{4k_2 + 1} \right) }{ U_{2}\prod\limits_{k_1 = 0}^{s - 1} A_{4k_1 + 2}C_{4k_1 + 4} + \sum\limits_{l = 0}^{s - 1}\left((B_{4l + 2}C_{4l + 4} + D_{4l + 4})\prod\limits_{k_2 = l + 1}^{s - 1}A_{4k_2 + 2}C_{4k_2 + 4} \right)} \nonumber \\
            & \quad \times \frac{V_{3}\prod\limits_{k_1 = 0}^{s - 1} A_{4k_1 + 5}C_{4k_1 + 3} + \sum\limits_{l = 0}^{s - 1}\left((A_{4l + 5}D_{4l + 3} + B_{4l + 5})\prod\limits_{k_2 = l + 1}^{s - 1}A_{4k_2 + 5}C_{4k_2 + 3} \right)}{U_{0}\prod\limits_{k_1 = 0}^{s} A_{4k_1}C_{4k_1 + 2} + \sum\limits_{l = 0}^{s}\left((B_{4l}C_{4l + 2} + D_{4l  + 2})\prod\limits_{k_2 = l + 1}^{s}A_{4k_2 }C_{4k_2 + 2} \right)},
\end{align}
\begin{align}\label{20a2}
x_{4n + 2} & = x_{2}\prod_{s = 0}^{n - 1}\frac{ U_{4s + 2}U_{4s + 4}}{V_{4s + 3}V_{4s + 5}} \nonumber\\
           & = x_{2}\prod_{s = 0}^{n - 1}\frac{U_{2}\prod\limits_{k_1 = 0}^{s - 1} A_{4k_1 + 2}C_{4k_1 + 4} + \sum\limits_{l = 0}^{s - 1}\left((B_{4l + 2}C_{4l + 4} + D_{4l + 4})\prod\limits_{k_2 = l + 1}^{s - 1}A_{4k_2 + 2}C_{4k_2 + 4} \right)}{V_{3}\prod\limits_{k_1 = 0}^{s - 1} A_{4k_1 + 5}C_{4k_1 + 3} + \sum\limits_{l = 0}^{s - 1}\left((A_{4l + 5}D_{4l + 3} + B_{4l + 5})\prod\limits_{k_2 = l + 1}^{s - 1}A_{4k_2 + 5}C_{4k_2 + 3} \right)} \nonumber\\
           & \quad \times \frac{  U_{0}\prod\limits_{k_1 = 0}^{s} A_{4k_1}C_{4k_1 + 2} + \sum\limits_{l = 0}^{s}\left((B_{4l}C_{4l + 2} + D_{4l  + 2})\prod\limits_{k_2 = l + 1}^{s}A_{4k_2 }C_{4k_2 + 2} \right) }{V_{1}\prod\limits_{k_1 = 0}^{s} A_{4k_1 + 3}C_{4k_1 + 1} + \sum\limits_{l = 0}^{s}\left((A_{4l + 3}D_{4l + 1} + B_{4l + 3})\prod\limits_{k_2 = l + 1}^{s}A_{4k_2 + 3}C_{4k_2 + 1} \right)},
\end{align}
\begin{align}\label{20b2}
y_{4n + 2} & = y_{2}\prod_{s = 0}^{n - 1}\frac{V_{4s + 2} V_{4s + 4} }{U_{4s + 3}U_{4s + 5}} \nonumber\\%\end{align}\begin{align}
           & = y_{2}\prod_{s = 0}^{n - 1}\frac{ V_{2}\prod\limits_{k_1 = 0}^{s - 1} A_{4k_1 + 4}C_{4k_1 + 2} + \sum\limits_{l = 0}^{s- 1}\left((A_{4l + 4}D_{4l + 2} + B_{4l + 4})\prod\limits_{k_2 = l + 1}^{s - 1}A_{4k_2 + 4}C_{4k_2 + 2} \right) }{U_{3}\prod\limits_{k_1 = 0}^{s - 1} A_{4k_1 + 3}C_{4k_1 + 5} + \sum\limits_{l = 0}^{s - 1}\left((B_{4l + 3}C_{4l + 5} + D_{4l + 5})\prod\limits_{k_2 = l + 1}^{s - 1}A_{4k_2 + 3}C_{4k_2 + 5} \right)}  \nonumber\end{align}\begin{align}
           & \quad \times \frac{ V_{0}\prod\limits_{k_1 = 0}^{s} A_{4k_1 + 2}C_{4k_1} + \sum\limits_{l = 0}^{s}\left((A_{4l + 2}D_{4l} + B_{4l + 2})\prod\limits_{k_2 = l + 1}^{s}A_{4k_2 + 2}C_{4k_2} \right)}{U_{1}\prod\limits_{k_1 = 0}^{s} A_{4k_1 + 1}C_{4k_1 + 3} + \sum\limits_{l = 0}^{s}\left((B_{4l + 1}C_{4l + 3} + D_{4l + 3})\prod\limits_{k_2 = l + 1}^{s}A_{4k_2 + 1}C_{4k_2 + 3} \right)},
\end{align}
\begin{align}\label{20a3}
x_{4n + 3} & = x_{3}\prod_{s = 0}^{n - 1}\frac{ U_{4s + 3}U_{4s + 5}}{V_{4s + 4}V_{4s + 6}} \nonumber\\
           & = x_{3}\prod_{s = 0}^{n - 1}\frac{ U_{3}\prod\limits_{k_1 = 0}^{s - 1} A_{4k_1 + 3}C_{4k_1 + 5} + \sum\limits_{l = 0}^{s - 1}\left((B_{4l + 3}C_{4l + 5} + D_{4l + 5})\prod\limits_{k_2 = l + 1}^{s - 1}A_{4k_2 + 3}C_{4k_2 + 5} \right)}{V_{0}\prod\limits_{k_1 = 0}^{s} A_{4k_1 + 2}C_{4k_1} + \sum\limits_{l = 0}^{s}\left((A_{4l + 2}D_{4l} + B_{4l + 2})\prod\limits_{k_2 = l + 1}^{s}A_{4k_2 + 2}C_{4k_2} \right)} \nonumber\\
           & \quad \times  \frac{ U_{1}\prod\limits_{k_1 = 0}^{s} A_{4k_1 + 1}C_{4k_1 + 3} + \sum\limits_{l = 0}^{s}\left((B_{4l + 1}C_{4l + 3} + D_{4l + 3})\prod\limits_{k_2 = l + 1}^{s}A_{4k_2 + 1}C_{4k_2 + 3} \right)}{ V_{2}\prod\limits_{k_1 = 0}^{s} A_{4k_1 + 4}C_{4k_1 + 2} + \sum\limits_{l = 0}^{s}\left((A_{4l + 4}D_{4l + 2} + B_{4l + 4})\prod\limits_{k_2 = l + 1}^{s}A_{4k_2 + 4}C_{4k_2 + 2} \right)},
\end{align}
\begin{align}\label{20b3}
y_{4n + 3} & = y_{3}\prod_{s = 0}^{n - 1}\frac{V_{4s + 3} V_{4s + 5} }{U_{4s + 4}U_{4s + 6}}\nonumber\\
           & = y_{3}\prod_{s = 0}^{n - 1}\frac{V_{3}\prod\limits_{k_1 = 0}^{s - 1} A_{4k_1 + 5}C_{4k_1 + 3} + \sum\limits_{l = 0}^{s - 1}\left((A_{4l + 5}D_{4l + 3} + B_{4l + 5})\prod\limits_{k_2 = l + 1}^{s - 1}A_{4k_2 + 5}C_{4k_2 + 3} \right)}{U_{0}\prod\limits_{k_1 = 0}^{s} A_{4k_1}C_{4k_1 + 2} + \sum\limits_{l = 0}^{s}\left((B_{4l}C_{4l + 2} + D_{4l  + 2})\prod\limits_{k_2 = l + 1}^{s}A_{4k_2 }C_{4k_2 + 2} \right)}  \nonumber\\
           & \quad \times \frac{V_{1}\prod\limits_{k_1 = 0}^{s} A_{4k_1 + 3}C_{4k_1 + 1} + \sum\limits_{l = 0}^{s}\left((A_{4l + 3}D_{4l + 1} + B_{4l + 3})\prod\limits_{k_2 = l + 1}^{s}A_{4k_2 + 3}C_{4k_2 + 1} \right)}{U_{2}\prod\limits_{k_1 = 0}^{s} A_{4k_1 + 2}C_{4k_1 + 4} + \sum\limits_{l = 0}^{s}\left((B_{4l + 2}C_{4l + 4} + D_{4l + 4})\prod\limits_{k_2 = l + 1}^{s}A_{4k_2 + 2}C_{4k_2 + 4} \right)}.
\end{align}
\endgroup
\section{Solutions of equation \eqref{1.1}}
From the previous section, replacing $x_{n}$ with $x_{n -2}$, $y_{n}$ with $y_{n - 2}$, $V_{n}$ with $\frac{1}{x_{n - 1}y_{n-2}}$, $U_{n}$ with $\frac{1}{x_{n-2}y_{n - 1}}$ and $A_{n}, B_{n}, C_{n}, D_{n}$ with $a_{n}, b_{n}, c_{n} d_{n}$, respectively, we obtain the solutions for \eqref{1.1} as follows:
\begingroup\makeatletter\def\f@size{9}\check@mathfonts
\def\maketag@@@#1{\hbox{\m@th\large\normalfont#1}}%
\begin{align}%\label{21a0}
x_{4n - 2} & = \frac{x_{2}^{n}}{x_{-2}^{n - 1}}\prod_{s = 0}^{n - 1}\frac{\prod\limits_{k_1 = 0}^{s - 1} a_{4k_1}c_{4k_1 + 2} + x_{-2}y_{-1}\sum\limits_{l = 0}^{s - 1}\left((b_{4l}c_{4l + 2} + d_{4l  + 2})\prod\limits_{k_2 = l + 1}^{s - 1}a_{4k_2 }c_{4k_2 + 2} \right) }{\prod\limits_{k_1 = 0}^{s - 1} a_{4k_1 + 3}c_{4k_1 + 1} + x_0y_{-1}\sum\limits_{l = 0}^{s - 1}\left((a_{4l + 3}d_{4l + 1} + b_{4l + 3})\prod\limits_{k_2 = l + 1}^{s - 1}a_{4k_2 + 3}c_{4k_2 + 1} \right)} \nonumber\\
       & \quad \times \frac{\prod\limits_{k_1 = 0}^{s - 1} a_{4k_1 + 2}c_{4k_1 + 4} + x_0y_1\sum\limits_{l = 0}^{s - 1}\left((b_{4l + 2}c_{4l + 4} + d_{4l + 4})\prod\limits_{k_2 = l + 1}^{s - 1}a_{4k_2 + 2}c_{4k_2 + 4} \right)}{\prod\limits_{k_1 = 0}^{s - 1} a_{4k_1 + 5}c_{4k_1 + 3} + x_2y_1\sum\limits_{l = 0}^{s - 1}\left((a_{4l + 5}d_{4l + 3} + b_{4l + 5})\prod\limits_{k_2 = l + 1}^{s - 1}a_{4k_2 + 5}c_{4k_2 + 3} \right)} \nonumber\end{align}
    \begin{align}\label{21a0p}
       & = \frac{x_{2}^{n}}{x_{-2}^{n - 1}}\prod_{s = 0}^{n - 1}\frac{\prod\limits_{k_1 = 0}^{s - 1} a_{4k_1}c_{4k_1 + 2} + x_{-2}y_{-1}\sum\limits_{l = 0}^{s - 1}\left((b_{4l}c_{4l + 2} + d_{4l  + 2})\prod\limits_{k_2 = l + 1}^{s - 1}a_{4k_2 }c_{4k_2 + 2} \right) }{\prod\limits_{k_1 = 0}^{s - 1} a_{4k_1 + 3}c_{4k_1 + 1} + x_0y_{-1}\sum\limits_{l = 0}^{s - 1}(a_{4l + 3}d_{4l + 1} + b_{4l + 3})\prod\limits_{k_2 = l + 1}^{s - 1}a_{4k_2 + 3}c_{4k_2 + 1}} \nonumber\\
       & \quad \times \frac{    \prod\limits_{k_1 = 0}^{s - 1} a_{4k_1 + 2}c_{4k_1 + 4} + \frac{x_{-1}y_{-2}}{c_0 + d_{0}x_{-1}y_{-2}}\sum\limits_{l = 0}^{s - 1}\left((b_{4l + 2}c_{4l + 4} + d_{4l + 4})\prod\limits_{k_2 = l + 1}^{s - 1}a_{4k_2 + 2}c_{4k_2 + 4} \right)  }{  \prod\limits_{k_1 = 0}^{s - 1} a_{4k_1 + 5}c_{4k_1 + 3} + \frac{x_{-1}y_0}{a_1 + b_{1}x_{-1}y_0}\sum\limits_{l = 0}^{s - 1}\left((a_{4l + 5}d_{4l + 3} + b_{4l + 5})\prod\limits_{k_2 = l + 1}^{s - 1}a_{4k_2 + 5}c_{4k_2 + 3} \right)} \nonumber\\
       & = \frac{1}{x_{-2}^{n - 1}}\prod_{s = 0}^{n - 1}\Bigg(\frac{\prod\limits_{k_1 = 0}^{s - 1} a_{4k_1}c_{4k_1 + 2} + x_{-2}y_{-1}\sum\limits_{l = 0}^{s - 1}\left((b_{4l}c_{4l + 2} + d_{4l  + 2})\prod\limits_{k_2 = l + 1}^{s - 1}a_{4k_2 }c_{4k_2 + 2} \right) }{\prod\limits_{k_1 = 0}^{s - 1} a_{4k_1 + 3}c_{4k_1 + 1} + x_0y_{-1}\sum\limits_{l = 0}^{s - 1}\left((a_{4l + 3}d_{4l + 1} + b_{4l + 3})\prod\limits_{k_2 = l + 1}^{s - 1}a_{4k_2 + 3}c_{4k_2 + 1} \right)} \times\nonumber\\
       & %
         \frac{    c_{0}\prod\limits_{\mathclap{k_1 = 0}}^{\mathclap{s - 1}} a_{4k_1 + 2}c_{4k_1 + 4}  + \left(d_{0}\prod\limits_{\mathclap{k_1 = 0}}^{\mathclap{s - 1}} a_{4k_1 + 2}c_{4k_1 + 4} + \sum\limits_{l = 0}^{s - 1}(b_{4l + 2}c_{4l + 4} + d_{4l + 4})\prod\limits_{\mathclap{k_2 = l + 1}}^{\mathclap{s - 1}}a_{4k_2 + 2}c_{4k_2 + 4} \right)x_{-1}y_{-2}  }{  a_{1}\prod\limits_{\mathclap{k_1 = 0}}^{\mathclap{s - 1}} a_{4k_1 + 5}c_{4k_1 + 3} + \left( b_{1}\prod\limits_{\mathclap{k_1 = 0}}^{s - 1} a_{4k_1 + 5}c_{4k_1 + 3}  + \sum\limits_{l = 0}^{s - 1}(a_{4l + 5}d_{4l + 3} + b_{4l + 5})\prod\limits_{\mathclap{k_2 = l + 1}}^{\mathclap{s - 1}}a_{4k_2 + 5}c_{4k_2 + 3} \right)x_{-1}y_0} \Bigg) \nonumber\\
       & \times \left(\frac{a_1 + b_{1}x_{-1}y_{0}}{c_0 + d_{0}x_{-1}y_{-2}}\right)^{n}  \left(\frac{x_0y_0(c_0 + d_0x_{-1}y_{-2})}{y_{-2}(a_1 + b_1x_{-1}y_0)}\right)^n\nonumber \\
       & = \frac{x_0^{n}y_0^{n}}{x_{-2}^{n - 1}y_{-2}^{n}}\prod_{s = 0}^{n - 1}\frac{\prod\limits_{k_1 = 0}^{s - 1} a_{4k_1}c_{4k_1 + 2} + x_{-2}y_{-1}\sum\limits_{l = 0}^{s - 1}\left((b_{4l}c_{4l + 2} + d_{4l  + 2})\prod\limits_{k_2 = l + 1}^{s - 1}a_{4k_2 }c_{4k_2 + 2} \right) }{\prod\limits_{k_1 = 0}^{s - 1} a_{4k_1 + 3}c_{4k_1 + 1} + x_0y_{-1}\sum\limits_{l = 0}^{s - 1}\left((a_{4l + 3}d_{4l + 1} + b_{4l + 3})\prod\limits_{k_2 = l + 1}^{s - 1}a_{4k_2 + 3}c_{4k_2 + 1} \right)}\times \nonumber\\
       &  \frac{    c_{0}\prod\limits_{\mathclap{k_1 = 0}}^{\mathclap{s - 1}} a_{4k_1 + 2}c_{4k_1 + 4}  + \left(d_{0}\prod\limits_{\mathclap{k_1 = 0}}^{\mathclap{s - 1}} a_{4k_1 + 2}c_{4k_1 + 4} + \sum\limits_{l = 0}^{s - 1}(b_{4l + 2}c_{4l + 4} + d_{4l + 4})\prod\limits_{\mathclap{k_2 = l + 1}}^{\mathclap{s - 1}}a_{4k_2 + 2}c_{4k_2 + 4} \right)x_{-1}y_{-2}  }{  a_{1}\prod\limits_{\mathclap{k_1 = 0}}^{\mathclap{s - 1}} a_{4k_1 + 5}c_{4k_1 + 3} + \left( b_{1}\prod\limits_{\mathclap{k_1 = 0}}^{\mathclap{s - 1}} a_{4k_1 + 5}c_{4k_1 + 3}  + \sum\limits_{l = 0}^{s - 1}(a_{4l + 5}d_{4l + 3} + b_{4l + 5})\prod\limits_{\mathclap{k_2 = l + 1}}^{\mathclap{s - 1}}a_{4k_2 + 5}c_{4k_2 + 3} \right)x_{-1}y_0},
\end{align}
\begin{align}%\label{21b0}
y_{4n - 2} & = \frac{y_2^{n}}{y_{-2}^{n - 1}}\prod_{s = 0}^{n - 1}\frac{\prod\limits_{k_1 = 0}^{s - 1} a_{4k_1 + 2}c_{4k_1} + x_{-1}y_{-2}\sum\limits_{l = 0}^{s - 1}\left((a_{4l + 2}d_{4l} + b_{4l + 2})\prod\limits_{k_2 = l + 1}^{s - 1}a_{4k_2 + 2}c_{4k_2} \right)}{ \prod\limits_{k_1 = 0}^{s - 1} a_{4k_1 + 1}c_{4k_1 + 3} + x_{-1}y_0\sum\limits_{l = 0}^{s - 1}\left((b_{4l + 1}c_{4l + 3} + d_{4l + 3})\prod\limits_{k_2 = l + 1}^{s - 1}a_{4k_2 + 1}c_{4k_2 + 3} \right)}\nonumber\\
& \quad \times \frac{\prod\limits_{k_1 = 0}^{s - 1} a_{4k_1 + 4}c_{4k_1 + 2} + x_1y_0\sum\limits_{l = 0}^{s- 1}\left((a_{4l + 4}d_{4l + 2} + b_{4l + 4})\prod\limits_{k_2 = l + 1}^{s - 1}a_{4k_2 + 4}c_{4k_2 + 2} \right)}{\prod\limits_{k_1 = 0}^{s - 1} a_{4k_1 + 3}c_{4k_1 + 5} + x_1y_2\sum\limits_{l = 0}^{s - 1}\left((b_{4l + 3}c_{4l + 5} + d_{4l + 5})\prod\limits_{k_2 = l + 1}^{s - 1}a_{4k_2 + 3}c_{4k_2 + 5} \right)}\nonumber\\%\end{align}\begin{align}
         & = \frac{y_2^{n}}{y_{-2}^{n - 1}}\prod_{s = 0}^{n - 1}\frac{\prod\limits_{k_1 = 0}^{s - 1} a_{4k_1 + 2}c_{4k_1} + x_{-1}y_{-2}\sum\limits_{l = 0}^{s - 1}\left((a_{4l + 2}d_{4l} + b_{4l + 2})\prod\limits_{k_2 = l + 1}^{s - 1}a_{4k_2 + 2}c_{4k_2} \right)}{ \prod\limits_{k_1 = 0}^{s - 1} a_{4k_1 + 1}c_{4k_1 + 3} + x_{-1}y_0\sum\limits_{l = 0}^{s - 1}\left((b_{4l + 1}c_{4l + 3} + d_{4l + 3})\prod\limits_{k_2 = l + 1}^{s - 1}a_{4k_2 + 1}c_{4k_2 + 3} \right)}\nonumber\end{align}\begin{align}
& \quad \times \frac{   \prod\limits_{k_1 = 0}^{s - 1} a_{4k_1 + 4}c_{4k_1 + 2} + \frac{x_{-2}y_{-1}}{a_0 + b_0x_{-2}y_{-1}}\sum\limits_{l = 0}^{s- 1}\left((a_{4l + 4}d_{4l + 2} + b_{4l + 4})\prod\limits_{k_2 = l + 1}^{s - 1}a_{4k_2 + 4}c_{4k_2 + 2} \right)}{\prod\limits_{k_1 = 0}^{s - 1} a_{4k_1 + 3}c_{4k_1 + 5} + \frac{x_0y_{-1}}{c_1 + d_1x_0y_{-1}}\sum\limits_{l = 0}^{s - 1}\left((b_{4l + 3}c_{4l + 5} + d_{4l + 5})\prod\limits_{k_2 = l + 1}^{s - 1}a_{4k_2 + 3}c_{4k_2 + 5} \right)}\nonumber\\
       & = \frac{x_0^{n}y_0^{n}}{x_{-2}^{n}y_{-2}^{n - 1}}\prod_{s = 0}^{n - 1}\frac{\prod\limits_{k_1 = 0}^{s - 1} a_{4k_1 + 2}c_{4k_1} + x_{-1}y_{-2}\sum\limits_{l = 0}^{s - 1}\left((a_{4l + 2}d_{4l} + b_{4l + 2})\prod\limits_{k_2 = l + 1}^{s - 1}a_{4k_2 + 2}c_{4k_2} \right)}{ \prod\limits_{k_1 = 0}^{s - 1} a_{4k_1 + 1}c_{4k_1 + 3} + x_{-1}y_0\sum\limits_{l = 0}^{s - 1}\left((b_{4l + 1}c_{4l + 3} + d_{4l + 3})\prod\limits_{k_2 = l + 1}^{s - 1}a_{4k_2 + 1}c_{4k_2 + 3} \right)}\times\nonumber\\
&  \frac{   a_{0}\prod\limits_{\mathclap{k_1 = 0}}^{\mathclap{s - 1}} a_{4k_1 + 4}c_{4k_1 + 2} + \left(b_{0}\prod\limits_{\mathclap{k_1 = 0}}^{\mathclap{s - 1}} a_{4k_1 + 4}c_{4k_1 + 2} + \sum\limits_{l = 0}^{s- 1}(a_{4l + 4}d_{4l + 2} + b_{4l + 4})\prod\limits_{\mathclap{k_2 = l + 1}}^{\mathclap{s - 1}}a_{4k_2 + 4}c_{4k_2 + 2} \right)x_{-2}y_{-1}}{ c_{1}\prod\limits_{\mathclap{k_1 = 0}}^{\mathclap{s - 1}} a_{4k_1 + 3}c_{4k_1 + 5} +  \left( d_{1}\prod\limits_{\mathclap{k_1 = 0}}^{\mathclap{s - 1}} a_{4k_1 + 3}c_{4k_1 + 5}  + \sum\limits_{l = 0}^{s - 1}(b_{4l + 3}c_{4l + 5} + d_{4l + 5})\prod\limits_{\mathclap{k_2 = l + 1}}^{\mathclap{s - 1}}a_{4k_2 + 3}c_{4k_2 + 5} \right)x_{0}y_{-1}},\label{21b0p}
\end{align}
\begin{align}%\label{21a1}
x_{4n - 1} & = \frac{x_{-1}y_{-2}^{n}}{y_2^{n}}\prod_{s = 0}^{n - 1}\frac{\prod\limits_{k_1 = 0}^{s - 1} a_{4k_1 + 1}c_{4k_1 + 3} + x_{-1}y_0\sum\limits_{l = 0}^{s - 1}\left((b_{4l + 1}c_{4l + 3} + d_{4l + 3})\prod\limits_{k_2 = l + 1}^{s - 1}a_{4k_2 + 1}c_{4k_2 + 3} \right)}{  \prod\limits_{k_1 = 0}^{s - 1} a_{4k_1 + 4}c_{4k_1 + 2} + x_1y_0\sum\limits_{l = 0}^{s- 1}\left((a_{4l + 4}d_{4l + 2} + b_{4l + 4})\prod\limits_{k_2 = l + 1}^{s - 1}a_{4k_2 + 4}c_{4k_2 + 2} \right)}\nonumber \\
           & \quad \times \frac{\prod\limits_{k_1 = 0}^{s - 1} a_{4k_1 + 3}c_{4k_1 + 5} + x_1y_2\sum\limits_{l = 0}^{s - 1}\left((b_{4l + 3}c_{4l + 5} + d_{4l + 5})\prod\limits_{k_2 = l + 1}^{s - 1}a_{4k_2 + 3}c_{4k_2 + 5} \right)}{\prod\limits_{k_1 = 0}^{s} a_{4k_1 + 2}c_{4k_1} + x_{-1}y_{-2}\sum\limits_{l = 0}^{s}\left((a_{4l + 2}d_{4l} + b_{4l + 2})\prod\limits_{k_2 = l + 1}^{s}a_{4k_2 + 2}c_{4k_2} \right)}\nonumber \\ %\end{align}\begin{align}
           & = \frac{x_{-1}y_{-2}^{n}}{y_2^{n}}\prod_{s = 0}^{n - 1}\frac{\prod\limits_{k_1 = 0}^{s - 1} a_{4k_1 + 1}c_{4k_1 + 3} + x_{-1}y_0\sum\limits_{l = 0}^{s - 1}\left((b_{4l + 1}c_{4l + 3} + d_{4l + 3})\prod\limits_{k_2 = l + 1}^{s - 1}a_{4k_2 + 1}c_{4k_2 + 3} \right)}{  \prod\limits_{k_1 = 0}^{s - 1} a_{4k_1 + 4}c_{4k_1 + 2} + \frac{x_{-2}y_{-1}}{a_0 + b_0x_{-2}y_{-1}}\sum\limits_{l = 0}^{s- 1}\left((a_{4l + 4}d_{4l + 2} + b_{4l + 4})\prod\limits_{k_2 = l + 1}^{s - 1}a_{4k_2 + 4}c_{4k_2 + 2} \right)}\nonumber \\
           & \quad \times \frac{\prod\limits_{k_1 = 0}^{s - 1} a_{4k_1 + 3}c_{4k_1 + 5} + \frac{x_0y_{-1}}{c_1 + d_1x_0y_{-1}}\sum\limits_{l = 0}^{s - 1}\left((b_{4l + 3}c_{4l + 5} + d_{4l + 5})\prod\limits_{k_2 = l + 1}^{s - 1}a_{4k_2 + 3}c_{4k_2 + 5} \right)}{\prod\limits_{k_1 = 0}^{s} a_{4k_1 + 2}c_{4k_1} + x_{-1}y_{-2}\sum\limits_{l = 0}^{s}\left((a_{4l + 2}d_{4l} + b_{4l + 2})\prod\limits_{k_2 = l + 1}^{s}a_{4k_2 + 2}c_{4k_2} \right)}\nonumber \\
          & = \prod_{\mathclap{s = 0}}^{\mathclap{n - 1}}\left(\frac{\prod\limits_{\mathclap{k_1 = 0}}^{\mathclap{s - 1}} a_{4k_1 + 1}c_{4k_1 + 3} + x_{-1}y_0\sum\limits_{l = 0}^{s - 1}\left((b_{4l + 1}c_{4l + 3} + d_{4l + 3})\prod\limits_{\mathclap{k_2 = l + 1}}^{\mathclap{s - 1}}a_{4k_2 + 1}c_{4k_2 + 3} \right)}{ a_{0}\prod\limits_{\mathclap{k_1 = 0}}^{\mathclap{s - 1}} a_{4k_1 + 4}c_{4k_1 + 2} + \left(b_{0}\prod\limits_{\mathclap{k_1 = 0}}^{\mathclap{s - 1}} a_{4k_1 + 4}c_{4k_1 + 2}    +  \sum\limits_{l = 0}^{s- 1}(a_{4l + 4}d_{4l + 2} + b_{4l + 4})\prod\limits_{\mathclap{k_2 = l + 1}}^{\mathclap{s - 1}}a_{4k_2 + 4}c_{4k_2 + 2} \right)x_{-2}y_{-1}}\right.\nonumber \\
           & \left.\times \frac{   c_1\prod\limits_{\mathclap{k_1 = 0}}^{\mathclap{s - 1}} a_{4k_1 + 3}c_{4k_1 + 5} +  \left(d_1\prod\limits_{\mathclap{k_1 = 0}}^{\mathclap{s - 1}} a_{4k_1 + 3}c_{4k_1 + 5}  + \sum\limits_{l = 0}^{s - 1}(b_{4l + 3}c_{4l + 5} + d_{4l + 5})\prod\limits_{\mathclap{k_2 = l + 1}}^{\mathclap{s - 1}}a_{4k_2 + 3}c_{4k_2 + 5} \right)x_0y_{-1}}{\prod\limits_{\mathclap{k_1 = 0}}^{s} a_{4k_1 + 2}c_{4k_1} + x_{-1}y_{-2}\sum\limits_{l = 0}^{s}\left((a_{4l + 2}d_{4l} + b_{4l + 2})\prod\limits_{k_2 = l + 1}^{s}a_{4k_2 + 2}c_{4k_2} \right)}\right)\nonumber\\
           & \quad \quad \quad \quad \times \frac{x_{-1}x_{-2}^{n}y_{-2}^n}{x_0^{n}y_0^{n}},\label{21a1p}
\end{align}
\begin{align}%\label{21b1}
y_{4n - 1}  & = \frac{y_{-1}x_{-2}^{n}}{x_2^{n}}\prod_{i = 0}^{n - 1}\frac{\prod\limits_{k_1 = 0}^{s - 1} a_{4k_1 + 3}c_{4k_1 + 1} + x_0y_{-1}\sum\limits_{l = 0}^{s - 1}\left((a_{4l + 3}d_{4l + 1} + b_{4l + 3})\prod\limits_{k_2 = l + 1}^{s - 1}a_{4k_2 + 3}c_{4k_2 + 1} \right) }{\prod\limits_{k_1 = 0}^{s - 1} a_{4k_1 + 2}c_{4k_1 + 4} + x_0y_1\sum\limits_{l = 0}^{s - 1}\left((b_{4l + 2}c_{4l + 4} + d_{4l + 4})\prod\limits_{k_2 = l + 1}^{s - 1}a_{4k_2 + 2}c_{4k_2 + 4} \right)} \nonumber \\
            & \quad \times \frac{\prod\limits_{k_1 = 0}^{s - 1} a_{4k_1 + 5}c_{4k_1 + 3} + x_2y_1\sum\limits_{l = 0}^{s - 1}\left((a_{4l + 5}d_{4l + 3} + b_{4l + 5})\prod\limits_{k_2 = l + 1}^{s - 1}a_{4k_2 + 5}c_{4k_2 + 3} \right)}{\prod\limits_{k_1 = 0}^{s} a_{4k_1}c_{4k_1 + 2} + x_{-2}y_{-1}\sum\limits_{l = 0}^{s}\left((b_{4l}c_{4l + 2} + d_{4l  + 2})\prod\limits_{k_2 = l + 1}^{s}a_{4k_2 }c_{4k_2 + 2} \right)}\nonumber\\
           % \end{align}
            %\begin{align}
           & = \prod_{i = 0}^{n - 1}\left(\frac{\prod\limits_{\mathclap{k_1 = 0}}^{\mathclap{s - 1}} a_{4k_1 + 3}c_{4k_1 + 1} + x_0y_{-1}\sum\limits_{l = 0}^{s - 1}\left((a_{4l + 3}d_{4l + 1} + b_{4l + 3})\prod\limits_{\mathclap{k_2 = l + 1}}^{\mathclap{s - 1}}a_{4k_2 + 3}c_{4k_2 + 1} \right) }{   c_0\prod\limits_{\mathclap{k_1 = 0}}^{\mathclap{s - 1}} a_{4k_1 + 2}c_{4k_1 + 4} + \left( d_0\prod\limits_{\mathclap{k_1 = 0}}^{\mathclap{s - 1}} a_{4k_1 + 2}c_{4k_1 + 4}    +  \sum\limits_{l = 0}^{s - 1}(b_{4l + 2}c_{4l + 4} + d_{4l + 4})\prod\limits_{\mathclap{k_2 = l + 1}}^{\mathclap{s - 1}}a_{4k_2 + 2}c_{4k_2 + 4} \right)x_{-1}y_{-2}   } \right.\nonumber \\
            & \left. \times \frac{  a_1\prod\limits_{\mathclap{k_1 = 0}}^{\mathclap{s - 1}} a_{4k_1 + 5}c_{4k_1 + 3} + \left(b_1\prod\limits_{\mathclap{k_1 = 0}}^{\mathclap{s - 1}} a_{4k_1 + 5}c_{4k_1 + 3}   + \sum\limits_{l = 0}^{s - 1}(a_{4l + 5}d_{4l + 3} + b_{4l + 5})\prod\limits_{\mathclap{k_2 = l + 1}}^{\mathclap{s - 1}}a_{4k_2 + 5}c_{4k_2 + 3} \right)x_{-1}y_0}{\prod\limits_{\mathclap{k_1 = 0}}^{s} a_{4k_1}c_{4k_1 + 2} + x_{-2}y_{-1}\sum\limits_{l = 0}^{s}\left((b_{4l}c_{4l + 2} + d_{4l  + 2})\prod\limits_{\mathclap{k_2 = l + 1}}^{s}a_{4k_2 }c_{4k_2 + 2} \right)}\right)\nonumber\\
            & \quad \quad \quad \times \frac{y_{-1}x_{-2}^{n}y_{-2}^n}{x_0^{n}y_0^{n}},\label{21b1p}
\end{align}
\begin{align}%\label{21a2}
x_{4n} & = \frac{x_{0}x_2^{n}}{x_{-2}^{n}}\prod_{s = 0}^{n - 1}\frac{\prod\limits_{k_1 = 0}^{s - 1} a_{4k_1 + 2}c_{4k_1 + 4} + x_0y_1\sum\limits_{l = 0}^{s - 1}\left((b_{4l + 2}c_{4l + 4} + d_{4l + 4})\prod\limits_{k_2 = l + 1}^{s - 1}a_{4k_2 + 2}c_{4k_2 + 4} \right)}{\prod\limits_{k_1 = 0}^{s - 1} a_{4k_1 + 5}c_{4k_1 + 3} + x_2y_1\sum\limits_{l = 0}^{s - 1}\left((a_{4l + 5}d_{4l + 3} + b_{4l + 5})\prod\limits_{k_2 = l + 1}^{s - 1}a_{4k_2 + 5}c_{4k_2 + 3} \right)} \nonumber\\
          & \quad \times \frac{\prod\limits_{k_1 = 0}^{s} a_{4k_1}c_{4k_1 + 2} + x_{-2}y_{-1}\sum\limits_{l = 0}^{s}\left((b_{4l}c_{4l + 2} + d_{4l  + 2})\prod\limits_{k_2 = l + 1}^{s}a_{4k_2 }c_{4k_2 + 2} \right) }{\prod\limits_{k_1 = 0}^{s} a_{4k_1 + 3}c_{4k_1 + 1} + x_0y_{-1}\sum\limits_{l = 0}^{s}\left((a_{4l + 3}d_{4l + 1} + b_{4l + 3})\prod\limits_{k_2 = l + 1}^{s}a_{4k_2 + 3}c_{4k_2 + 1} \right)}\nonumber\\
           & = \prod_{s = 0}^{n - 1}\left(\frac{   c_0\prod\limits_{\mathclap{k_1 = 0}}^{\mathclap{s - 1}} a_{4k_1 + 2}c_{4k_1 + 4} + \left(d_0\prod\limits_{\mathclap{k_1 = 0}}^{\mathclap{s - 1}} a_{4k_1 + 2}c_{4k_1 + 4}     +  \sum\limits_{l = 0}^{s - 1}(b_{4l + 2}c_{4l + 4} + d_{4l + 4})\prod\limits_{\mathclap{k_2 = l + 1}}^{\mathclap{s - 1}}a_{4k_2 + 2}c_{4k_2 + 4} \right)x_{-1}y_{-2}}{ a_1\prod\limits_{\mathclap{k_1 = 0}}^{\mathclap{s - 1}} a_{4k_1 + 5}c_{4k_1 + 3} + \left(b_1\prod\limits_{\mathclap{k_1 = 0}}^{s - 1} a_{4k_1 + 5}c_{4k_1 + 3}    +  \sum\limits_{l = 0}^{s - 1}(a_{4l + 5}d_{4l + 3} + b_{4l + 5})\prod\limits_{\mathclap{k_2 = l + 1}}^{\mathclap{s - 1}}a_{4k_2 + 5}c_{4k_2 + 3} \right)x_{-1}y_0}\right. \nonumber\\
           &\left. \times \frac{\prod\limits_{k_1 = 0}^{s} a_{4k_1}c_{4k_1 + 2} + x_{-2}y_{-1}\sum\limits_{l = 0}^{s}\left((b_{4l}c_{4l + 2} + d_{4l  + 2})\prod\limits_{k_2 = l + 1}^{s}a_{4k_2 }c_{4k_2 + 2} \right) }{\prod\limits_{k_1 = 0}^{s} a_{4k_1 + 3}c_{4k_1 + 1} + x_0y_{-1}\sum\limits_{l = 0}^{s}\left((a_{4l + 3}d_{4l + 1} + b_{4l + 3})\prod\limits_{k_2 = l + 1}^{s}a_{4k_2 + 3}c_{4k_2 + 1} \right)}\right)       \frac{x_{0}^{n + 1}y_0^{n}}{x_{-2}^{n}y_{-2}^{n}},\label{21a2p}
\end{align}
\begin{align}%\label{21b2}
y_{4n} & = \frac{y_{0}y_2^{n}}{y_{-2}^{n}}\prod_{\mathclap{s = 0}}^{\mathclap{n - 1}}\frac{\prod\limits_{\mathclap{k_1 = 0}}^{\mathclap{s - 1}} a_{4k_1 + 4}c_{4k_1 + 2} + x_1y_0\sum\limits_{l = 0}^{s- 1}\left((a_{4l + 4}d_{4l + 2} + b_{4l + 4})\prod\limits_{k_2 = l + 1}^{s - 1}a_{4k_2 + 4}c_{4k_2 + 2} \right) }{\prod\limits_{k_1 = 0}^{s - 1} a_{4k_1 + 3}c_{4k_1 + 5} + x_1y_2\sum\limits_{l = 0}^{s - 1}\left((b_{4l + 3}c_{4l + 5} + d_{4l + 5})\prod\limits_{k_2 = l + 1}^{s - 1}a_{4k_2 + 3}c_{4k_2 + 5} \right)} \nonumber \\
           & \quad \times \frac{\prod\limits_{\mathclap{k_1 = 0}}^{s} a_{4k_1 + 2}c_{4k_1} + x_{-1}y_{-2}\sum\limits_{l = 0}^{s}\left((a_{4l + 2}d_{4l} + b_{4l + 2})\prod\limits_{\mathclap{k_2 = l + 1}}^{s}a_{4k_2 + 2}c_{4k_2} \right)}{\prod\limits_{\mathclap{k_1 = 0}}^{s} a_{4k_1 + 1}c_{4k_1 + 3} + x_{-1}y_0\sum\limits_{l = 0}^{s}\left((b_{4l + 1}c_{4l + 3} + d_{4l + 3})\prod\limits_{\mathclap{k_2 = l + 1}}^{s}a_{4k_2 + 1}c_{4k_2 + 3} \right)},\nonumber\\
           & = \prod_{\mathclap{s = 0}}^{\mathclap{n - 1}}\left(\frac{ a_0\prod\limits_{\mathclap{k_1 = 0}}^{\mathclap{s - 1}} a_{4k_1 + 4}c_{4k_1 + 2} + \left(b_0\prod\limits_{\mathclap{k_1 = 0}}^{\mathclap{s - 1}} a_{4k_1 + 4}c_{4k_1 + 2}   +  \sum\limits_{l = 0}^{s- 1}(a_{4l + 4}d_{4l + 2} + b_{4l + 4})\prod\limits_{\mathclap{k_2 = l + 1}}^{\mathclap{s - 1}}a_{4k_2 + 4}c_{4k_2 + 2} \right)x_{-2}y_{-1} }{ c_1\prod\limits_{\mathclap{k_1 = 0}}^{\mathclap{s - 1}} a_{4k_1 + 3}c_{4k_1 + 5} + \left(d_1\prod\limits_{\mathclap{k_1 = 0}}^{\mathclap{s - 1}} a_{4k_1 + 3}c_{4k_1 + 5}   + \sum\limits_{l = 0}^{s - 1}(b_{4l + 3}c_{4l + 5} + d_{4l + 5})\prod\limits_{\mathclap{k_2 = l + 1}}^{\mathclap{s - 1}}a_{4k_2 + 3}c_{4k_2 + 5} \right)x_0y_{-1}} \right.\nonumber \\
           & \left. \quad \times \frac{\prod\limits_{k_1 = 0}^{s} a_{4k_1 + 2}c_{4k_1} + x_{-1}y_{-2}\sum\limits_{l = 0}^{s}\left((a_{4l + 2}d_{4l} + b_{4l + 2})\prod\limits_{k_2 = l + 1}^{s}a_{4k_2 + 2}c_{4k_2} \right)}{\prod\limits_{k_1 = 0}^{s} a_{4k_1 + 1}c_{4k_1 + 3} + x_{-1}y_0\sum\limits_{l = 0}^{s}\left((b_{4l + 1}c_{4l + 3} + d_{4l + 3})\prod\limits_{k_2 = l + 1}^{s}a_{4k_2 + 1}c_{4k_2 + 3} \right)}\right)         \frac{x_0^{n}y_{0}^{n + 1}}{x_{-2}^{n}y_{-2}^{n}},\label{21b2p}
\end{align}
\begin{align}%\label{21a3}
x_{4n + 1} & = \frac{x_{1}y_{-2}^{n}}{y_2^{n}}\prod_{s = 0}^{n - 1}\frac{\prod\limits_{k_1 = 0}^{s - 1} a_{4k_1 + 3}c_{4k_1 + 5} + x_1y_2\sum\limits_{l = 0}^{s - 1}\left((b_{4l + 3}c_{4l + 5} + d_{4l + 5})\prod\limits_{k_2 = l + 1}^{s - 1}a_{4k_2 + 3}c_{4k_2 + 5} \right)}{\prod\limits_{k_1 = 0}^{s} a_{4k_1 + 2}c_{4k_1} + x_{-1}y_{-2}\sum\limits_{l = 0}^{s}\left((a_{4l + 2}d_{4l} + b_{4l + 2})\prod\limits_{k_2 = l + 1}^{s}a_{4k_2 + 2}c_{4k_2} \right)} \nonumber\\%\end{align}\begin{align}
           & \quad \times  \frac{\prod\limits_{k_1 = 0}^{s} a_{4k_1 + 1}c_{4k_1 + 3} + x_{-1}y_0\sum\limits_{l = 0}^{s}\left((b_{4l + 1}c_{4l + 3} + d_{4l + 3})\prod\limits_{k_2 = l + 1}^{s}a_{4k_2 + 1}c_{4k_2 + 3} \right)}{ \prod\limits_{k_1 = 0}^{s} a_{4k_1 + 4}c_{4k_1 + 2} + x_1y_0\sum\limits_{l = 0}^{s}\left((a_{4l + 4}d_{4l + 2} + b_{4l + 4})\prod\limits_{k_2 = l + 1}^{s}a_{4k_2 + 4}c_{4k_2 + 2} \right)},\nonumber\\% \end{align}\begin{align}
           & = \frac{x_{1}y_{-2}^{n}}{y_2^{n}}\prod_{s = 0}^{n - 1}\frac{  \prod\limits_{k_1 = 0}^{s - 1} a_{4k_1 + 3}c_{4k_1 + 5} + \frac{x_0y_{-1}}{c_1 + d_1x_0y_{-1}}\sum\limits_{l = 0}^{s - 1}\left((b_{4l + 3}c_{4l + 5} + d_{4l + 5})\prod\limits_{k_2 = l + 1}^{s - 1}a_{4k_2 + 3}c_{4k_2 + 5} \right)}{\prod\limits_{k_1 = 0}^{s} a_{4k_1 + 2}c_{4k_1} + x_{-1}y_{-2}\sum\limits_{l = 0}^{s}\left((a_{4l + 2}d_{4l} + b_{4l + 2})\prod\limits_{k_2 = l + 1}^{s}a_{4k_2 + 2}c_{4k_2} \right)} \nonumber\\
           & \quad \times  \frac{\prod\limits_{k_1 = 0}^{s} a_{4k_1 + 1}c_{4k_1 + 3} + x_{-1}y_0\sum\limits_{l = 0}^{s}\left((b_{4l + 1}c_{4l + 3} + d_{4l + 3})\prod\limits_{k_2 = l + 1}^{s}a_{4k_2 + 1}c_{4k_2 + 3} \right)}{   \prod\limits_{k_1 = 0}^{s} a_{4k_1 + 4}c_{4k_1 + 2} +  \frac{x_{-2}y_{-1}}{a_0 + b_0x_{-2}y_{-1}}\sum\limits_{l = 0}^{s}\left((a_{4l + 4}d_{4l + 2} + b_{4l + 4})\prod\limits_{k_2 = l + 1}^{s}a_{4k_2 + 4}c_{4k_2 + 2} \right)},\nonumber \\%\end{align}\begin{align}
           & =  \prod_{s = 0}^{n - 1}\left(\frac{  c_1\prod\limits_{\mathclap{k_1 = 0}}^{\mathclap{s - 1}} a_{4k_1 + 3}c_{4k_1 + 5} + \left(d_1\prod\limits_{\mathclap{k_1 = 0}}^{\mathclap{s - 1}} a_{4k_1 + 3}c_{4k_1 + 5}    + \sum\limits_{l = 0}^{s - 1}(b_{4l + 3}c_{4l + 5} + d_{4l + 5})\prod\limits_{\mathclap{k_2 = l + 1}}^{\mathclap{s - 1}}a_{4k_2 + 3}c_{4k_2 + 5} \right)x_0y_{-1}}{\prod\limits_{\mathclap{k_1 = 0}}^{s} a_{4k_1 + 2}c_{4k_1} + x_{-1}y_{-2}\sum\limits_{l = 0}^{s}\left((a_{4l + 2}d_{4l} + b_{4l + 2})\prod\limits_{\mathclap{k_2 = l + 1}}^{s}a_{4k_2 + 2}c_{4k_2} \right)}\right. \nonumber\end{align}\begin{align}
           & \left. \times  \frac{\prod\limits_{\mathclap{k_1 = 0}}^{s} a_{4k_1 + 1}c_{4k_1 + 3} + x_{-1}y_0\sum\limits_{l = 0}^{s}\left((b_{4l + 1}c_{4l + 3} + d_{4l + 3})\prod\limits_{\mathclap{k_2 = l + 1}}^{s}a_{4k_2 + 1}c_{4k_2 + 3} \right)}{  a_0\prod\limits_{\mathclap{k_1 = 0}}^{s} a_{4k_1 + 4}c_{4k_1 + 2} + \left(b_0\prod\limits_{\mathclap{k_1 = 0}}^{s} a_{4k_1 + 4}c_{4k_1 + 2}    +  \sum\limits_{l = 0}^{s}(a_{4l + 4}d_{4l + 2} + b_{4l + 4})\prod\limits_{\mathclap{k_2 = l + 1}}^{s}a_{4k_2 + 4}c_{4k_2 + 2} \right)x_{-2}y_{-1}}\right)\nonumber \\
           & \times \frac{x_{-2}^{n + 1}y_{-2}^{n}y_{-1}}{x_0^{n}y_0^{n+1}(a_0 + b_0x_{-2}y_{-1})},\label{21a3p}
\end{align}
\begin{align}%\label{21b3}
y_{4n + 1} & = \frac{y_{1}x_{-2}^{n}}{x_2^{n}}\prod_{s = 0}^{n - 1}\frac{\prod\limits_{k_1 = 0}^{s - 1} a_{4k_1 + 5}c_{4k_1 + 3} + x_2y_1\sum\limits_{l = 0}^{s - 1}\left((a_{4l + 5}d_{4l + 3} + b_{4l + 5})\prod\limits_{k_2 = l + 1}^{s - 1}a_{4k_2 + 5}c_{4k_2 + 3} \right)}{\prod\limits_{k_1 = 0}^{s} a_{4k_1}c_{4k_1 + 2} + x_{-2}y_{-1}\sum\limits_{l = 0}^{s}\left((b_{4l}c_{4l + 2} + d_{4l  + 2})\prod\limits_{k_2 = l + 1}^{s}a_{4k_2 }c_{4k_2 + 2} \right)} \nonumber \\ %\end{align}\begin{align}
          & \quad \times \frac{\prod\limits_{k_1 = 0}^{s} a_{4k_1 + 3}c_{4k_1 + 1} + x_0y_{-1}\sum\limits_{l = 0}^{s}\left((a_{4l + 3}d_{4l + 1} + b_{4l + 3})\prod\limits_{k_2 = l + 1}^{s}a_{4k_2 + 3}c_{4k_2 + 1} \right)}{\prod\limits_{k_1 = 0}^{s} a_{4k_1 + 2}c_{4k_1 + 4} + x_0y_1\sum\limits_{l = 0}^{s}\left((b_{4l + 2}c_{4l + 4} + d_{4l + 4})\prod\limits_{k_2 = l + 1}^{s}a_{4k_2 + 2}c_{4k_2 + 4} \right)}\nonumber\\%\end{align}\begin{align}
           & = \frac{y_{1}x_{-2}^{n}}{x_2^{n}}\prod_{s = 0}^{n - 1}\frac{\prod\limits_{k_1 = 0}^{s - 1} a_{4k_1 + 5}c_{4k_1 + 3} + \frac{x_{-1}y_0}{a_1 + b_1x_{-1}y_0}\sum\limits_{l = 0}^{s - 1}\left((a_{4l + 5}d_{4l + 3} + b_{4l + 5})\prod\limits_{k_2 = l + 1}^{s - 1}a_{4k_2 + 5}c_{4k_2 + 3} \right)}{\prod\limits_{k_1 = 0}^{s} a_{4k_1}c_{4k_1 + 2} + x_{-2}y_{-1}\sum\limits_{l = 0}^{s}\left((b_{4l}c_{4l + 2} + d_{4l  + 2})\prod\limits_{k_2 = l + 1}^{s}a_{4k_2 }c_{4k_2 + 2} \right)} \nonumber \\
           & \quad \times \frac{\prod\limits_{k_1 = 0}^{s} a_{4k_1 + 3}c_{4k_1 + 1} + x_0y_{-1}\sum\limits_{l = 0}^{s}\left((a_{4l + 3}d_{4l + 1} + b_{4l + 3})\prod\limits_{k_2 = l + 1}^{s}a_{4k_2 + 3}c_{4k_2 + 1} \right)}{\prod\limits_{k_1 = 0}^{s} a_{4k_1 + 2}c_{4k_1 + 4} + \frac{x_{-1}y_{-2}}{c_0 + d_0x_{-1}y_{-2}}\sum\limits_{l = 0}^{s}\left((b_{4l + 2}c_{4l + 4} + d_{4l + 4})\prod\limits_{k_2 = l + 1}^{s}a_{4k_2 + 2}c_{4k_2 + 4} \right)}\nonumber\\% \end{align}\begin{align}
           & = \prod_{s = 0}^{n - 1}\left(\frac{    a_1\prod\limits_{\mathclap{k_1 = 0}}^{\mathclap{s - 1}} a_{4k_1 + 5}c_{4k_1 + 3} + \left(b_1\prod\limits_{\mathclap{k_1 = 0}}^{\mathclap{s - 1}} a_{4k_1 + 5}c_{4k_1 + 3}  + \sum\limits_{l = 0}^{s - 1}(a_{4l + 5}d_{4l + 3} + b_{4l + 5})\prod\limits_{\mathclap{k_2 = l + 1}}^{\mathclap{s - 1}}a_{4k_2 + 5}c_{4k_2 + 3} \right)x_{-1}y_0}{\prod\limits_{\mathclap{k_1 = 0}}^{s} a_{4k_1}c_{4k_1 + 2} + x_{-2}y_{-1}\sum\limits_{l = 0}^{s}\left((b_{4l}c_{4l + 2} + d_{4l  + 2})\prod\limits_{\mathclap{k_2 = l + 1}}^{s}a_{4k_2 }c_{4k_2 + 2} \right)}\right. \nonumber \\
           & \left.\times \frac{\prod\limits_{\mathclap{k_1 = 0}}^{s} a_{4k_1 + 3}c_{4k_1 + 1} + x_0y_{-1}\sum\limits_{l = 0}^{s}\left((a_{4l + 3}d_{4l + 1} + b_{4l + 3})\prod\limits_{\mathclap{k_2 = l + 1}}^{s}a_{4k_2 + 3}c_{4k_2 + 1} \right)}{ c_0\prod\limits_{\mathclap{k_1 = 0}}^{s} a_{4k_1 + 2}c_{4k_1 + 4} + \left(d_0\prod\limits_{\mathclap{k_1 = 0}}^{s} a_{4k_1 + 2}c_{4k_1 + 4}    +  \sum\limits_{l = 0}^{s}(b_{4l + 2}c_{4l + 4} + d_{4l + 4})\prod\limits_{\mathclap{k_2 = l + 1}}^{s}a_{4k_2 + 2}c_{4k_2 + 4} \right)x_{-1}y_{-2}}\right)\nonumber \\
           & \times \frac{x_{-2}^{n}y_{-2}^{n + 1}x_{-1}}{x_0^{n +1}y_{0}^{n}(c_0 + d_0x_{-1}y_{-2})}.\label{21b3p}
\end{align}
\endgroup
Thus the explicit solution $\{x_{n}\}_{n=-2}^{\infty}$, $\{y_{n}\}_{n=-2}^{\infty}$ to Equation \eqref{1.1}  is given by equations \eqref{21a0p}, \eqref{21b0p}, \eqref{21a1p}, \eqref{21b1p}, \eqref{21a2p}, \eqref{21b2p}, \eqref{21a3p} and \eqref{21b3p}. In the following section, we look at special cases, where solutions are expressed in terms of initial values. In some of these cases, we  generalise and simplify some results found in \cite{EI}.
\subsection{The case $a_n, b_n c_n, d_n$ are constant and explicit solutions}
Assume that $a_n = a, b_n = b, c_n = d, d_n = d$ are constants in the equations obtained in the previous section. The solution is then given by:
\begingroup\makeatletter\def\f@size{9}\check@mathfonts
\def\maketag@@@#1{\hbox{\m@th\large\normalfont#1}}%
\begin{align}\label{21a0}
x_{4n - 2} & = \frac{x_0^{n}y_0^{n}}{y_{-2}^nx_{-2}^{n - 1}}\prod_{s = 0}^{n - 1}\frac{(ac)^{s} + (bc + d)x_{-2}y_{-1}\sum\limits_{l = 0}^{s - 1}(ac)^{l}}{(ac)^{s} + (ad + b)x_0y_{-1}\sum\limits_{l = 0}^{s - 1}(ac)^{l}}\,\,\frac{ (a^{s}c^{s + 1} + \left((ac)^{s}d + (bc + d)\sum\limits_{l = 0}^{s - 1}(ac)^{l}\right)y_{-2}x_{-1} } { (a^{s + 1}c^{s} + \left((ac)^{s}b + (ad + b)\sum\limits_{l = 0}^{s - 1}(ac)^{l}\right)x_{-1}y_0},
\end{align}
\begin{align}\label{21a1}
x_{4n - 1}  & = \frac{x_{-1}x_{-2}^{n}y_{-2}^{n}}{x_0^{n}y_0^n}\prod_{s = 0}^{n - 1}\frac{(ac)^{s} + (bc + d)x_{-1}y_0\sum\limits_{l = 0}^{s - 1}(ac)^{l}}{a^{s + 1}c^{s} + \left((ac)^{s}b + (ad + b)\sum\limits_{l = 0}^{s- 1}(ac)^{l}\right)x_{-2}y_{-1}}\\
           & \times \frac{a^{s}c^{s + 1}  + \left((ac)^{s}d + (bc + d)\sum\limits_{l = 0}^{s - 1}(ac)^l\right)x_0y_{-1}}{(ac)^{s+1} + (ad + b)x_{-1}y_{-2}\sum\limits_{l = 0}^{s}(ac)^l},
\end{align}
\begin{align}\label{21a2}
x_{4n} & = \frac{x_{0}^{n + 1}y_0^{n}}{x_{-2}^{n}y_{-2}^n}\prod_{s = 0}^{n - 1}\frac{(a^{s}c^{s+1} + \left((ac)^{s}d + (bc + d)\sum\limits_{l = 0}^{s - 1}(ac)^{l}\right)x_{-1}y_{-2}}{(a^{s + 1}c^s + \left((ac)^{s}b + (ad + b)\sum\limits_{l = 0}^{s - 1}(ac)^{l}\right)x_{-1}y_0}\,\, \frac{(ac)^{s + 1} + (bc + d)x_{-2}y_{-1}\sum\limits_{l = 0}^{s}(ac)^l}{(ac)^{s + 1} + (ad + b)x_0y_{-1}\sum\limits_{l = 0}^{s}(ac)^{l}},
\end{align}
\begin{align}\label{21a3}
x_{4n + 1}
           & =  \frac{x_{-2}^{n +1}y_{-2}^{n}y_{-1}}{x_0^{n}y_{0}^{n +1}(a + bx_{-2}y_{-1})}\prod_{s = 0}^{n - 1}\left(\frac{a^{s}c^{s+1} + \left((ac)^{s}d + (bc + d)\sum\limits_{l = 0}^{s - 1}(ac)^{l}\right)x_0y_{-1}}{(ac)^{s + 1} + (ad + b)x_{-1}y_{-2}\sum\limits_{l = 0}^{s}(ac)^{l}}\right. \nonumber \\
           &\quad \quad \quad \quad \quad \quad \quad \quad \quad \quad \times \, \left.\frac{(ac)^{s + 1} + (bc + d)x_{-1}y_0\sum\limits_{l = 0}^{s}(ac)^l}{ a^{s + 2}c^{s + 1} + \left((ac)^{s + 1}b + (ad + b)\sum\limits_{l = 0}^{s}(ac)^{l}\right)x_{-2}y_{-1}}\right),
\end{align}
\begin{align}\label{21b0}
y_{4n - 2}
          & = \frac{x_0^{n}y_0^{n}}{x_{-2}^{n}y_{-2}^{n - 1}}\prod_{s = 0}^{n - 1} \frac{ (ac)^{s}  + (ad + b)x_{-1}y_{-2}\sum\limits_{l = 0}^{s - 1}(ac)^{l} }{ (ac)^{s} + (bc + d)x_{-1}y_0\sum\limits_{l = 0}^{s - 1} (ac)^{l}}\,\,\frac{a^{s + 1}c^{s} + \left((ac)^{s}b + (ad + b)\sum\limits_{l = 0}^{s- 1}(ac)^{l}\right)x_{-2}y_{-1}}{(a^{s}c^{s + 1} + \left((ac)^{s}d + (bc + d)\sum\limits_{l = 0}^{s - 1}(ac)^{l}\right)x_0y_{-1}},
\end{align}
\begin{align}\label{21b1}
y_{4n - 1}            & = \frac{x_{-2}^{n}y_{-2}^{n}y_{-1}}{x_0^{n}y_0^{n}}\prod_{i = 0}^{n - 1}\frac{(ac)^s + (ad + b)x_0y_{-1}\sum\limits_{l = 0}^{s - 1}(ac)^l}{a^{s}c^{s+1} + \left((ac)^{s}d + (bc + d)\sum\limits_{l = 0}^{s - 1}(ac)^l\right)x_{-1}y_{-2}}%\\&\times
 \frac{a^{s + 1}c^s + \left((ac)^{s}b + (ad + b)\sum\limits_{l = 0}^{s - 1}(ac)^l\right)x_{-1}y_0}{(ac)^{s + 1} + (bc + d)x_{-2}y_{-1}\sum\limits_{l = 0}^{s}(ac)^{l}},
\end{align}
\begin{align}\label{21b2}
y_{4n}       & = \frac{x_0^{n}y_{0}^{n + 1}}{x_{-2}^{n}y_{-2}^{n}}\prod_{s = 0}^{n - 1}\frac{(a^{s + 1}c^s + \left( (ac)^{s}b + (ad + b)\sum\limits_{l = 0}^{s- 1}(ac)^l\right)x_{-2}y_{-1}}{\prod\limits_{k_1 = 0}^{s - 1} a^{s}c^{s+1} + \left((ac)^{s}d + (bc + d)\sum\limits_{l = 0}^{s - 1}(ac)^l\right)x_0y_{-1}} \,\,\frac{(ac)^{s + 1} + (ad + b)x_{-1}y_{-2}\sum\limits_{l = 0}^{s}(ac)^l}{(ac)^{s + 1} + (bc + d)x_{-1}y_0\sum\limits_{l = 0}^{s}(ac)^{l}},
\end{align}
\begin{align}\label{21b3}
y_{4n + 1}          & = \frac{x_{-1}x_{-2}^{n}y_{-2}^{n+1}}{x_0^{n +1}y_0^{n}(c + dx_{-1}y_{-2})}\prod_{s = 0}^{n - 1}\left(\frac{ a^{s + 1}c^s + \left((ac)^{s}b + (ad + b)\sum\limits_{l = 0}^{s - 1}(ac)^l\right)x_{-1}y_0}{(ac)^{s + 1} + (bc + d)x_{-2}y_{-1}\sum\limits_{l = 0}^{s}(ac)^{l}}\right.\\
          & \quad \quad \quad \quad \quad \quad \quad \quad \quad \quad \times \, \left.\frac{(ac)^{s + 1} + (ad + b)x_0y_{-1}\sum\limits_{l = 0}^{s}(ac)^{l}}{a^{s + 1}c^{s + 2} + \left((ac)^{s + 1}d + (bc + d)\sum\limits_{l = 0}^{s}(ac)^l\right)x_{-1}y_{-2}}\right).
\end{align}
\endgroup
\subsubsection{The case $a = b = c = d = 1$}
The solution of the system, which is Theorem  1 of Elsayed \cite{EI} is:
\begingroup\makeatletter\def\f@size{9}\check@mathfonts
\def\maketag@@@#1{\hbox{\m@th\large\normalfont#1}}%
\begin{align}%\label{21a0}
x_{4n - 2} & =  \frac{x_0^{n}y_0^{n}}{y_{-2}^nx_{-2}^{n - 1}}\prod_{s = 0}^{n - 1}\frac{1 + 2sx_{-2}y_{-1}}{1 + 2sx_0y_{-1}}\,\,\frac{ 1 + (1 + 2s)y_{-2}x_{-1} } { 1 + (1 + 2s)x_{-1}y_0}, \label{21a0}\\
%\end{align}
%\begin{align}\label{21a1}
x_{4n - 1} & =  \frac{x_{-1}x_{-2}^{n}y_{-2}^{n}}{x_0^{n}y_0^n}\prod_{s = 0}^{n - 1}\frac{1 + 2sx_{-1}y_0}{1 + (1 + 2s)x_{-2}y_{-1}}\,\,\frac{1  + (1 + 2s)x_0y_{-1}}{1 + (2s + 2)x_{-1}y_{-2}} ,\label{21a1}\\%\nonumber
%\end{align}
%\begin{align}\label{21a2}
x_{4n} & = \frac{x_{0}^{n + 1}y_0^{n}}{x_{-2}^{n}y_{-2}^n}\prod_{s = 0}^{n - 1}\frac{1 + (1 + 2s)x_{-1}y_{-2}}{1 + (1 + 2s)x_{-1}y_0}\,\, \frac{1 + (2s+2)x_{-2}y_{-1}}{1 + (2s + 2)x_0y_{-1}}    ,\label{21a2}\\%\nonumber
%\end{align}
%\begin{align}\label{21a3}
x_{4n + 1} & = \frac{x_{-2}^{n +1}y_{-2}^{n}y_{-1}}{x_0^{n}y_{0}^{n +1}(1 + x_{-2}y_{-1})}\prod_{s = 0}^{n - 1}\frac{1 + (1 + 2s)x_0y_{-1}}{1 + (2s + 2)x_{-1}y_{-2}}\frac{1 + (2s + 2)x_{-1}y_0}{ 1 + (2s + 3)x_{-2}y_{-1}}   ,\label{21a3}\\%\nonumber
%\end{align}
%\begin{align}\label{21b0}
y_{4n - 2} & = \frac{x_0^{n}y_0^{n}}{x_{-2}^{n}y_{-2}^{n - 1}}\prod_{s = 0}^{n - 1} \frac{ 1  + 2sx_{-1}y_{-2}}{1 + 2sx_{-1}y_0}\,\,\frac{1 + (1 + 2s)x_{-2}y_{-1}}{1 + (1 + 2s)x_0y_{-1}}   ,\label{21b0}\\%\nonumber
%\end{align}
%\begin{align}\label{21b1}
y_{4n - 1}  & = \frac{x_{-2}^{n}y_{-2}^{n}y_{-1}}{x_0^{n}y_0^{n}}\prod_{i = 0}^{n - 1}\frac{1 + 2sx_0y_{-1}}{1 + (1 + 2s)x_{-1}y_{-2}}\,\,\frac{1 + (1 + 2s)x_{-1}y_0}{1 + (2s + 2)x_{-2}y_{-1}}     ,\label{21b1}\\%\nonumber
%\end{align}
%\begin{align}\label{21b2}
y_{4n} & = \frac{x_0^{n}y_{0}^{n + 1}}{x_{-2}^{n}y_{-2}^{n}}\prod_{s = 0}^{n - 1}\frac{1 + (1 + 2s)x_{-2}y_{-1}}{1 + (1 + 2s)x_0y_{-1}} \,\,\frac{1 + (2s + 2)x_{-1}y_{-2}}{1+ (2s + 2)x_{-1}y_0}  ,\label{21b2}\\%\nonumber
%\end{align}
%\begin{align}\label{21b3}
y_{4n + 1} & =  \frac{x_{-1}x_{-2}^{n}y_{-2}^{n+1}}{x_0^{n +1}y_0^{n}(1 + x_{-1}y_{-2})}\prod_{s = 0}^{n - 1}\frac{ 1 + (1 + 2s)x_{-1}y_0}{1 + (2s + 2)x_{-2}y_{-1}}\,\,\frac{1 + (2s + 2)x_0y_{-1}}{1 + (2s + 3)x_{-1}y_{-2}}.\label{21b3}%\nonumber
\end{align}
\endgroup
\subsubsection{The case $a = c = 1, b = d = -1$}
In this case, we obtain Theorem 2 of Elsayed \cite{EI} as follows
\begingroup\makeatletter\def\f@size{9}\check@mathfonts
\def\maketag@@@#1{\hbox{\m@th\large\normalfont#1}}%
\begin{align}%\label{22a0}
x_{4n - 2}  & = \frac{x_0^{n}y_0^{n}}{y_{-2}^nx_{-2}^{n - 1}}\prod_{s = 0}^{n - 1}\frac{1 - 2sx_{-2}y_{-1}}{1 - 2sx_0y_{-1}}\,\,\frac{ 1 -(1 + 2s)y_{-2}x_{-1} } { 1 - (1 + 2s)x_{-1}y_0},\label{22a0}\\%\nonumber
%\end{align}
%\begin{align}\label{22a1}
x_{4n - 1}  & = \frac{x_{-1}x_{-2}^{n}y_{-2}^{n}}{x_0^{n}y_0^n}\prod_{s = 0}^{n - 1}\frac{1 -2sx_{-1}y_0}{1 - (1 + 2s)x_{-2}y_{-1}}\,\,\frac{1  - (1 + 2s)x_0y_{-1}}{1 - (2s + 2)x_{-1}y_{-2}},\label{22a1}\\%\nonumber
%\end{align}
%\begin{align}\label{22a2}
x_{4n} & = \frac{x_{0}^{n + 1}y_0^{n}}{x_{-2}^{n}y_{-2}^n}\prod_{s = 0}^{n - 1}\frac{1 - (1+2s)x_{-1}y_{-2}}{1 - (1 + 2s)x_{-1}y_0}\,\, \frac{1 - (2s + 2)x_{-2}y_{-1}}{1 - (2s + 2)x_0y_{-1}},\label{22a2}\\%\nonumber
%\end{align}
%\begin{align}\label{22a3}
x_{4n + 1} & =  \frac{x_{-2}^{n +1}y_{-2}^{n}y_{-1}}{x_0^{n}y_{0}^{n +1}(1 - x_{-2}y_{-1})}\prod_{s = 0}^{n - 1}\frac{1 - (1 + 2s)x_0y_{-1}}{1 - (2s + 2)x_{-1}y_{-2}}\,\,\frac{1 - (2s + 2)x_{-1}y_0}{ 1 - (2s + 3)x_{-2}y_{-1}},\label{22a3}\\%\nonumber
%\end{align}
%\begin{align}\label{22b0}
y_{4n - 2} & = \frac{x_0^{n}y_0^{n}}{x_{-2}^{n}y_{-2}^{n - 1}}\prod_{s = 0}^{n - 1} \frac{ 1  - 2sx_{-1}y_{-2} }{ 1 - 2sx_{-1}y_0}\,\,\frac{1 - (1 + 2s)x_{-2}y_{-1}}{1 - (1 + 2s)x_0y_{-1}},\label{22b0}%\nonumber
\end{align}
\begin{align}%\label{22b1}
y_{4n - 1}  & = \frac{x_{-2}^{n}y_{-2}^{n}y_{-1}}{x_0^{n}y_0^{n}}\prod_{i = 0}^{n - 1}\frac{1 - 2sx_0y_{-1}}{1 - (1 + 2s)x_{-1}y_{-2}}\,\,\frac{1 - (1 + 2s)x_{-1}y_0}{1 - (2s + 2)x_{-2}y_{-1}},\label{22b1}\\%\nonumber
%\end{align}
%\begin{align}\label{22b2}
y_{4n}  & = \frac{x_0^{n}y_{0}^{n + 1}}{x_{-2}^{n}y_{-2}^{n}}\prod_{s = 0}^{n - 1}\frac{1 - (1 + 2s)x_{-2}y_{-1}}{1 - (1 + 2s)x_0y_{-1}} \,\,\frac{1 - (2s + 2)x_{-1}y_{-2}}{1 - (2s + 2)x_{-1}y_0},\label{22b2}\\%\nonumber
%\end{align}
%\begin{align}\label{22b3}
y_{4n + 1} & = \frac{x_{-1}x_{-2}^{n}y_{-2}^{n+1}}{x_0^{n +1}y_0^{n}(1 - x_{-1}y_{-2})}\prod_{s = 0}^{n - 1}\frac{ 1 - (1 + 2s)x_{-1}y_0}{1 - (2s + 2)x_{-2}y_{-1}}\,\,\frac{1 - (2s + 2)x_0y_{-1}}{1 - (2s + 3)x_{-1}y_{-2}}.\label{22b3}%\nonumber
\end{align}
\endgroup
\subsubsection{Some of cases where the constants are unit}
Substituting the following values, we obtain the solution that Elsayed \cite{EI} got. \\
$a = c = d = -1, b = 1$ (Theorem 3 in \cite{EI})
$a = b = c = -1, d = 1$ (Theorem 4 in \cite{EI})
$a = b = c = 1, d = -1$ (Theorem 13 in \cite{EI})
$a = c = d = 1, b = -1$ (Theorem 14 in \cite{EI})
$a = c = -1, b = d = 1 $(Theorem 15 in \cite{EI})
$a = b = c = d = -1 $ (Theorem 16 in \cite{EI})
\subsubsection{Remaining cases where the constants are unit}
For each of the following cases:\\
$b = c = d = 1, a = -1$,\\
$a = b = d = -1, c = 1$,\\
$a = b = 1, \,\,c = d = -1$,\\
$a = d = 1, b = c = -1$, \\
 $a = b = d = 1,\,\, c = -1$, \\
 $a = b = -1,\,\, c = d = 1$, \\
 $b = c = d = -1,\,\, a = 1$, \\
$a = d = -1,\,\, b = c = 1$, \\
our solution  is represented by 8 equations, whereas in Elsayed's case \cite{EI} (see Theorems 5,6,7,8,9,10,11,12), the solution is represented by 16 equations. Thus ours is a great simplification of Elsayed's solution.
\subsection{The case when $a_n, b_n, c_n, d_n$ are sequences of period 4}
In this setting, we the solution is given by:
\begingroup\makeatletter\def\f@size{9}\check@mathfonts
\def\maketag@@@#1{\hbox{\m@th\large\normalfont#1}}%
\begin{align}\label{21a0}
x_{4n - 2} & = \frac{x_0^{n}y_0^{n}}{x_{-2}^{n - 1}y_{-2}^{n}}\prod_{s = 0}^{n - 1}\frac{(a_{0}c_{2})^{s} + (b_{0}c_{2} + d_{2})x_{-2}y_{-1}\sum\limits_{l = 0}^{s - 1}(a_{0 }c_{2})^{l} }{(a_{3}c_{1})^{s} + (a_{3}d_{1} + b_{3})x_0y_{-1}\sum\limits_{l = 0}^{s - 1}(a_{3}c_{1})^{l}} \nonumber\\
       & \quad \times \frac{    c_{0}(a_{2}c_{0})^{s}  + \left(d_{0}(a_{2}c_{0})^{s} + (b_{2}c_{0} + d_{0})\sum\limits_{l = 0}^{s - 1}(a_{2}c_{0})^{l} \right)x_{-1}y_{-2}  }{  a_{1}(a_{1}c_{3})^{s} + \left( b_{1}(a_{1}c_{3})^{s}  + (a_{1}d_{3} + b_{1})\sum\limits_{l = 0}^{s - 1}(a_{1}c_{3})^{l} \right)x_{-1}y_0},
\end{align}
\begin{align}%\label{21b0}
y_{4n - 2} & = \frac{x_0^{n}y_0^{n}}{x_{-2}^{n}y_{-2}^{n - 1}}\prod_{s = 0}^{n - 1}\frac{(a_{2}c_{0})^{s} + (a_{2}d_{0} + b_{2})x_{-1}y_{-2}\sum\limits_{l = 0}^{s - 1}(a_{2}c_{0})^{l}}{(a_{1}c_{3})^{s} + (b_{1}c_{3} + d_{3})x_{-1}y_0\sum\limits_{l = 0}^{s - 1}(a_{1}c_{3})^{l}}\nonumber\\
& \quad \times \frac{   a_{0}(a_{0}c_{2})^{l} + \left(b_{0}(a_{0}c_{2})^{s} + (a_{0}d_{2} + b_{0})\sum\limits_{l = 0}^{s- 1}(a_{0}c_{2})^{l} \right)x_{-2}y_{-1}}{ c_{1}(a_{3}c_{1})^{s} +  \left( d_{1}(a_{3}c_{1})^{s}  + (b_{3}c_{1} + d_{1})\sum\limits_{l = 0}^{s - 1}(a_{3}c_{1})^{l} \right)x_{0}y_{-1}},\label{21b0}\\
%\end{align}
%\begin{align}\label{21a1}
x_{4n - 1}  & = \frac{x_{-1}x_{-2}^{n}y_{-2}^n}{x_0^{n}y_0^{n}}\prod_{s = 0}^{n - 1}\frac{\prod\limits_{k_1 = 0}^{s - 1} a_{1}c_{3} + x_{-1}y_0\sum\limits_{l = 0}^{s - 1}\left((b_{1}c_{3} + d_{3})\prod\limits_{k_2 = l + 1}^{s - 1}a_{1}c_{3} \right)}{ a_{0}\prod\limits_{k_1 = 0}^{s - 1} a_{0}c_{2} + \left(b_{0}\prod\limits_{k_1 = 0}^{s - 1} a_{0}c_{2}    +  \sum\limits_{l = 0}^{s- 1}(a_{0}d_{2} + b_{0})\prod\limits_{k_2 = l + 1}^{s - 1}a_{0}c_{2} \right)x_{-2}y_{-1}}\nonumber \\
           & \quad \times \frac{   c_1(a_{3}c_{1})^{s} +  \left(d_1(a_{3}c_{1})^{s}  + (b_{3}c_{1} + d_{1})\sum\limits_{l = 0}^{s - 1}(a_{3}c_{1})^{l} \right)x_0y_{-1}}{(a_{2}c_{0})^{s  + 1} + (a_{2}d_{0} + b_{2})x_{-1}y_{-2}\sum\limits_{l = 0}^{s}(a_{2}c_{0})^{l} },\label{21a1}\\
%\end{align}
%\begin{align}\label{21b1}
y_{4n - 1} & = \frac{y_{-1}x_{-2}^{n}y_{-2}^n}{x_0^{n}y_0^{n}}\prod_{i = 0}^{n - 1}\frac{(a_{3}c_{1})^{s} + (a_{3}d_{1} + b_{3})x_0y_{-1}\sum\limits_{l = 0}^{s - 1}(a_{3}c_{1})^{l}}{   c_0(a_{2}c_{0})^{s} + \left( d_0(a_{2}c_{0})^{s}    +  (b_{2}c_{0} + d_{0})\sum\limits_{l = 0}^{s - 1}(a_{2}c_{0})^{l} \right)x_{-1}y_{-2}   } \nonumber \\
            & \quad \times \frac{  a_1(a_{1}c_{3})^{s} + \left(b_1(a_{1}c_{3})^{s}   + (a_{1}d_{3} + b_{1})\sum\limits_{l = 0}^{s - 1}(a_{1}c_{3})^{l} \right)x_{-1}y_0}{(a_{0}c_{2})^{s + 1} + (b_{4l}c_{2} + d_{2})x_{-2}y_{-1}\sum\limits_{l = 0}^{s}(a_{0}c_{2})^{l}},\label{21b1}\\
%\end{align}
%\begin{align}\label{21a2}
x_{4n} & = \frac{x_{0}^{n + 1}y_0^{n}}{x_{-2}^{n}y_{-2}^{n}}\prod_{s = 0}^{n - 1}\frac{   c_0(a_{2}c_{0})^{s} + \left(d_0(a_{2}c_{0})^{s}  + (b_{2}c_{0} + d_{0})\sum\limits_{l = 0}^{s - 1}(a_{2}c_{0})^{l} \right)x_{-1}y_{-2}}{ a_1(a_{1}c_{3})^{s} + \left(b_1(a_{1}c_{3})^{s}  + (a_{1}d_{3} + b_{1 })\sum\limits_{l = 0}^{s - 1}(a_{1}c_{3})^{l} \right)x_{-1}y_0} \nonumber\\
          & \quad \times \frac{(a_{0}c_{2})^{s + 1} + (b_{0}c_{2} + d_{2})x_{-2}y_{-1}\sum\limits_{l = 0}^{s}(a_{0}c_{2})^{l} }{(a_{3}c_{1})^{s + 1} + (a_{3}d_{1} + b_{3})x_0y_{-1}\sum\limits_{l = 0}^{s}(a_{3}c_{1})^{l}},\label{21a2}\\
%\end{align}
%\begin{align}\label{21b2}
y_{4n} & = \frac{x_0^{n}y_{0}^{n + 1}}{x_{-2}^{n}y_{-2}^{n}}\prod_{s = 0}^{n - 1}\frac{ a_0(a_{0}c_{2})^{s} + \left(b_0(a_{0}c_{2})^{s}   + (a_{0}d_{2} + b_{0})\sum\limits_{l = 0}^{s- 1}(a_{0}c_{2})^{l} \right)x_{-2}y_{-1} }{ c_1(a_{3}c_{1})^{s} + \left(d_1(a_{3}c_{1})^{s}   + (b_{3}c_{1} + d_{1 })\sum\limits_{l = 0}^{s - 1}(a_{3}c_{1})^{l} \right)x_0y_{-1}} \nonumber \\
          & \quad \times \frac{(a_{2}c_{0})^{s + 1} + (a_{2}d_{0} + b_{2})x_{-1}y_{-2}\sum\limits_{l = 0}^{s}(a_{2}c_{0})^{l}}{ (a_{1}c_{3})^{s + 1} + (b_{1}c_{3} + d_{3})x_{-1}y_0\sum\limits_{l = 0}^{s}(a_{1}c_{3})^{l}},\label{21b2}\\
%\end{align}
%\begin{align}\label{21a3}
x_{4n + 1} &  \frac{x_{-2}^{n + 1}y_{-2}^{n}y_{-1}}{x_0^{n}y_0^{n+1}(a_0 + b_0x_{-2}y_{-1})}\prod_{s = 0}^{n - 1}\frac{  c_1(a_{3}c_{1})^{s} + \left(d_1(a_{3}c_{1})^{s}    + (b_{3}c_{1} + d_{1})\sum\limits_{l = 0}^{s - 1}(a_{3}c_{1})^{l} \right)x_0y_{-1}}{(a_{2}c_{0})^{s + 1} + (a_{2}d_{4l} + b_{2})x_{-1}y_{-2}\sum\limits_{l = 0}^{s}(a_{2}c_{0})^{l} } \nonumber\\
          & \quad \times  \frac{(a_{1}c_{3})^{s + 1} + (b_{1}c_{3} + d_{3})x_{-1}y_0\sum\limits_{l = 0}^{s}(a_{1}c_{3})^{l}}{  a_0(a_{0}c_{2})^{s + 1} + \left(b_0(a_{0}c_{2})^{s + 1}    + (a_{0}d_{2} + b_{0})\sum\limits_{l = 0}^{s}(a_{0}c_{2})^{l} \right)x_{-2}y_{-1}},\label{21a3}
\end{align}
\begin{align}\label{21b3}
y_{4n + 1} &  = \frac{x_{-2}^{n}y_{-2}^{n + 1}x_{-1}}{x_0^{n +1}y_{0}^{n}(c_0 + d_0x_{-1}y_{-2})}\prod_{s = 0}^{n - 1}\frac{ a_1(a_{1}c_{3})^{s} + \left(b_1(a_{1}c_{3})^{s}  + (a_{1}d_{3} + b_{1})\sum\limits_{l = 0}^{s - 1}(a_{1}c_{3})^{l} \right)x_{-1}y_0}{(a_{0}c_{2})^{s + 1} + (b_{0}c_{2} + d_{2})x_{-2}y_{-1}\sum\limits_{l = 0}^{s}(a_{0}c_{2})^{l}} \nonumber \\
           & \quad \times \frac{(a_{3}c_{1})^{s + 1} + (a_{3}d_{1} + b_{3})x_0y_{-1}\sum\limits_{l = 0}^{s}(a_{3}c_{1})^{l}}{ c_0(a_{2}c_{0})^{s + 1} + \left(d_0(a_{2}c_{0})^{s + 1}    +  (b_{2}c_{0} + d_{0})\sum\limits_{l = 0}^{s}(a_{2}c_{0})^{l} \right)x_{-1}y_{-2}}.
\end{align}
\endgroup
\section{Existence of 2-periodic and 4-periodic solutions}
\begin{theorem}
If $x_{-2} = x_{0}, y_{-2} = y_{0}, a = c, b = d$ and $x_{-1}y_{-2} = x_{-2}y_{-1} = \frac{1 - a}{b}$, then the solution of the system $x_{n+1}=\frac{x_{n-2}y_{n-1}}{y_{n}(a + bx_{n-2}y_{n-1})}, \quad y_{n+1}=\frac{y_{n-2}x_{n-1}}{x_{n}(c + dy_{n-2}x_{n-1})}$ is periodic with period two.
\end{theorem}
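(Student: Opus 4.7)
The plan is to avoid invoking the long explicit closed‑form solutions from Section 2 and instead prove the $2$‑periodicity by direct induction, exploiting the symmetry $a=c$, $b=d$ and the fact that the hypothesis is engineered to make the denominators collapse to $1$. Under these hypotheses the system reads
\begin{equation*}
x_{n+1}=\frac{x_{n-2}y_{n-1}}{y_{n}(a+bx_{n-2}y_{n-1})},\qquad y_{n+1}=\frac{y_{n-2}x_{n-1}}{x_{n}(a+by_{n-2}x_{n-1})},
\end{equation*}
so the roles of $x$ and $y$ are symmetric. I introduce the auxiliary quantities $p_n:=x_{n-2}y_{n-1}$ and $q_n:=y_{n-2}x_{n-1}$. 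The first crucial observation is that whenever $p_n=\frac{1-a}{b}$ we have $a+bp_n=1$, which reduces the first recurrence to the multiplicative identity $x_{n+1}y_n=x_{n-2}y_{n-1}=p_n$; symmetrically, $q_n=\frac{1-a}{b}$ forces $y_{n+1}x_n=q_n$.

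Next I would set up the joint inductive hypothesis
\begin{equation*}
(H_n):\qquad x_{n}=x_{n-2},\quad y_{n}=y_{n-2},\quad p_n=q_n=\tfrac{1-a}{b},
\end{equation*}
to be proved for every $n\ge 0$. The base case $(H_0)$ is exactly the assumption of the theorem (noting $p_0=x_{-2}y_{-1}$ and $q_0=y_{-2}x_{-1}$). For the inductive step, assume $(H_n)$. Using the simplified recurrence and then $(H_n)$, I compute
\begin{equation*}
x_{n+1}=\frac{p_n}{y_n}=\frac{q_n}{y_{n-2}}=\frac{y_{n-2}x_{n-1}}{y_{n-2}}=x_{n-1},
\end{equation*}
and symmetrically $y_{n+1}=y_{n-1}$. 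From these two equalities,
\begin{equation*}
p_{n+1}=x_{n-1}y_n=x_{n-1}y_{n-2}=q_n=\tfrac{1-a}{b},\qquad q_{n+1}=y_{n-1}x_n=y_{n-1}x_{n-2}=p_n=\tfrac{1-a}{b},
\end{equation*}
so $(H_{n+1})$ holds.

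Applying $(H_n)$ for all $n\ge 0$ gives $x_{n+2}=x_n$ and $y_{n+2}=y_n$ for every $n\ge -2$, which is exactly the statement that the solution has period two. The only delicate point to check carefully is that the product invariant $p_n=q_n=\frac{1-a}{b}$ really propagates, since it is the sole mechanism guaranteeing the denominators stay equal to $1$; once this is in place, the periodicity of $x_n$ and $y_n$ drops out automatically and no computation with the Section~2 product/summation formulas is needed.
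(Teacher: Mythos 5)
Your proof is correct, and it takes a genuinely different route from the paper. The paper proves the theorem by specializing the explicit product/summation formulas for $x_{4n-2},x_{4n-1},x_{4n},x_{4n+1}$ (obtained earlier via the Lie symmetry reduction) to $a=c$, $b=d$, and then using $x_{-1}y_{-2}=x_{-2}y_{-1}=\frac{1-a}{b}$ to collapse each factor of the products to $1$, e.g.\ via identities such as $a^{2s+2}+(ab+b)x_{-1}y_{-2}a^{2s}=a^{2s}$; the $y$-sequences are handled ``similarly,'' and $x_{-2}=x_0$, $y_{-2}=y_0$ then upgrades period $4$ to period $2$. You instead argue directly on the recurrence: the invariant $p_n=q_n=\frac{1-a}{b}$ makes both denominators identically $1$, the relations $x_{n+1}y_n=p_n$, $y_{n+1}x_n=q_n$ follow, and a joint induction propagates both the invariant and $x_n=x_{n-2}$, $y_n=y_{n-2}$; your base case and inductive step check out (including the index bookkeeping $p_{n+1}=x_{n-1}y_n=q_n$, $q_{n+1}=y_{n-1}x_n=p_n$). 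What your approach buys is brevity and independence from the long closed-form solutions (so its validity does not rest on those formulas being typo-free), and it isolates the structural reason for periodicity — a conserved quantity forcing the denominators to equal $1$. What the paper's approach buys is consistency with its main results: the theorem there doubles as a check and an application of the explicit solution formulas, which is the point of the Lie-symmetry framework of the paper. The only things worth making explicit in your write-up are the standing nondegeneracy assumptions ($b\neq 0$ and $x_n,y_n\neq 0$ so that the solution exists and the division by $y_n$, $x_n$ is legitimate), which the paper also leaves implicit.
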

\begin{proof}
Under the assumptions $b = d, a = c$, it is clear that
\begingroup\makeatletter\def\f@size{9}\check@mathfonts
\def\maketag@@@#1{\hbox{\m@th\large\normalfont#1}}%
\begin{align}%\label{21a0}
x_{4n - 2} & = \frac{x_0^{n}y_0^{n}}{y_{-2}^nx_{-2}^{n - 1}}\prod_{s = 0}^{n - 1}\frac{(ac)^{s} + (bc + d)x_{-2}y_{-1}\sum\limits_{l = 0}^{s - 1}(ac)^{l}}{(ac)^{s} + (ad + b)x_0y_{-1}\sum\limits_{l = 0}^{s - 1}(ac)^{l}}\,\,\frac{ (a^{s}c^{s + 1} + \left((ac)^{s}d + (bc + d)\sum\limits_{l = 0}^{s - 1}(ac)^{l}\right)y_{-2}x_{-1} } { (a^{s + 1}c^{s} + \left((ac)^{s}b + (ad + b)\sum\limits_{l = 0}^{s - 1}(ac)^{l}\right)x_{-1}y_0}, \nonumber\\
           & = x_{-2},\label{21a0}\\
          % \end{align}
%\begin{align}\label{21a1}
x_{4n - 1}  & = \frac{x_{-1}x_{-2}^{n}y_{-2}^{n}}{x_0^{n}y_0^n}\prod_{s = 0}^{n - 1}\frac{(a)^{2s} + (ba + b)x_{-1}y_0\sum\limits_{l = 0}^{s - 1}(a)^{2l}}{a^{2s + 1} + \left((a)^{2s}b + (ab + b)\sum\limits_{l = 0}^{s- 1}(a)^{2l}\right)x_{-2}y_{-1}}
%\\ & \times
 \frac{a^{2s + 1}  + \left((a)^{2s}b + (ba + b)\sum\limits_{l = 0}^{s - 1}(a)^{2l}\right)x_0y_{-1}}{(a)^{2s+2} + (ab + b)x_{-1}y_{-2}\sum\limits_{l = 0}^{s}(a)^{2l}}, \nonumber\\
           & = x_{-1}\prod_{s = 0}^{n - 1}\frac{(a)^{2s} + (ba + b)x_{-1}y_{-2}\sum\limits_{l = 0}^{s - 1}(a)^{2l}}{(a)^{2s+2} + (ab + b)x_{-1}y_{-2}\sum\limits_{l = 0}^{s - 1}(a)^{2l} + (ab + b)x_{-1}y_{-2}a^{2s}}.\label{21a1}
\end{align}
\endgroup
But $x_{-1}y_{-2} = \frac{1 - a}{b}$ implies $a^{2} + (ab + b)x_{-1}y_{-2} = 1$ so that $a^{2s + 2} + (ab + b)x_{-1}y_{-2}a^{2s} = a^{2s}$. This yields $$ x_{4n - 1} = x_{-1}.$$
\begingroup\makeatletter\def\f@size{9}\check@mathfonts
\def\maketag@@@#1{\hbox{\m@th\large\normalfont#1}}%
\begin{align}\label{21a2}
x_{4n} & = \frac{x_{0}^{n + 1}y_0^{n}}{x_{-2}^{n}y_{-2}^n}\prod_{s = 0}^{n - 1}\frac{(a^{2s+1} + \left((a)^{2s}b + (ba + b)\sum\limits_{l = 0}^{s - 1}(a)^{2l}\right)x_{-1}y_{-2}}{(a^{2s + 1} + \left((a)^{2s}b + (ab + b)\sum\limits_{l = 0}^{s - 1}(a)^{2l}\right)x_{-1}y_0}\,\, \frac{(a)^{2s + 2} + (ba + b)x_{-2}y_{-1}\sum\limits_{l = 0}^{s}(a)^{2l}}{(a)^{2s + 2} + (ab + b)x_0y_{-1}\sum\limits_{l = 0}^{s}(a)^{2l}}\nonumber\\
      & = x_{0}
\end{align}
\begin{align}\label{21a3}
x_{4n + 1} & =  \frac{x_{-2}^{n +1}y_{-2}^{n}y_{-1}}{x_0^{n}y_{0}^{n +1}(a + bx_{-2}y_{-1})}\prod_{s = 0}^{n - 1}\left(\frac{a^{2s+1} + \left((a)^{2s}b + (ba + b)\sum\limits_{l = 0}^{s - 1}(a)^{2l}\right)x_0y_{-1}}{(a)^{2s + 2} + (ab + b)x_{-1}y_{-2}\sum\limits_{l = 0}^{s}(a)^{2l}}\right. \nonumber \\
           &\quad \quad \quad \quad \quad \quad \quad \quad \quad \quad \times \, \left.\frac{(a)^{2s + 2} + (ba + b)x_{-1}y_0\sum\limits_{l = 0}^{s}(a)^{2l}}{ a^{2s + 3} + \left((a)^{2s + 2}b + (ab + b)\sum\limits_{l = 0}^{s}(a)^{2l}\right)x_{-2}y_{-1}}\right), \nonumber\\
           &= \frac{x_0y_{-1}}{y_0(a + bx_{-2}y_{-1})}\prod_{s = 0}^{n - 1}\frac{ a^{2s+1} + \left((a)^{2s}b + (ba + b)\sum\limits_{l = 0}^{s - 1}(a)^{2l}\right)x_0y_{-1}}{ a^{2s + 3} + \left((a)^{2s + 2}b + (ab + b)\sum\limits_{l = 0}^{s}(a)^{2l}\right)x_{-2}y_{-1}} \nonumber\\
           & = x_{1}\prod_{s = 0}^{n - 1}\frac{ a^{2s+1} + \left((a)^{2s}b + (ba + b)\sum\limits_{l = 0}^{s - 1}(a)^{2l}\right)x_0y_{-1}}{ a^{2s + 3} + \left((a)^{2s + 2}b + (ab + b)\sum\limits_{l = 0}^{s}(a)^{2l}\right)x_{-2}y_{-1}}.
\end{align}
\endgroup
But $x_{-2}x_{-1} = \frac{1 - a}{b}$ implies that $a + bx_{0}x_{-1} = a^{3} + a^{2}bx_{-2}y_{-1} + (ab + b)x_{-2}y_{-1}$, which in turn yields $ a^{2s + 1} + a^{2s + 2}bx_{0}x_{-1} = a^{2s + 3} + a^{2s + 2}bx_{-2}y_{-1} + (ab + b)a^{2s}x_{-2}y_{-1}$. Thus
$$  x_{4n + 1} = x_{1}.$$
\noindent Similarly, it is not difficult to show that $y_{4n + j} = y_{j}$ for all $n \geq 0$ and $j\geq -2$. Since $x_{-2} = x_{0}$ and $y_{-2} = y_{0}$, we must have $x_{2 + j} = x_{j}$ and $y_{2 + j} =$ for all $j\geq -2$. Thus the solution has period 2.
\end{proof}
\noindent For illustration, we give numerical examples (See Figures \ref{graph2} and \ref{graph22}).
\begin{figure}[H]
    \centering
    \begin{minipage}{.47\textwidth}
        \centering
        \includegraphics[width=1\linewidth, height=0.2\textheight]{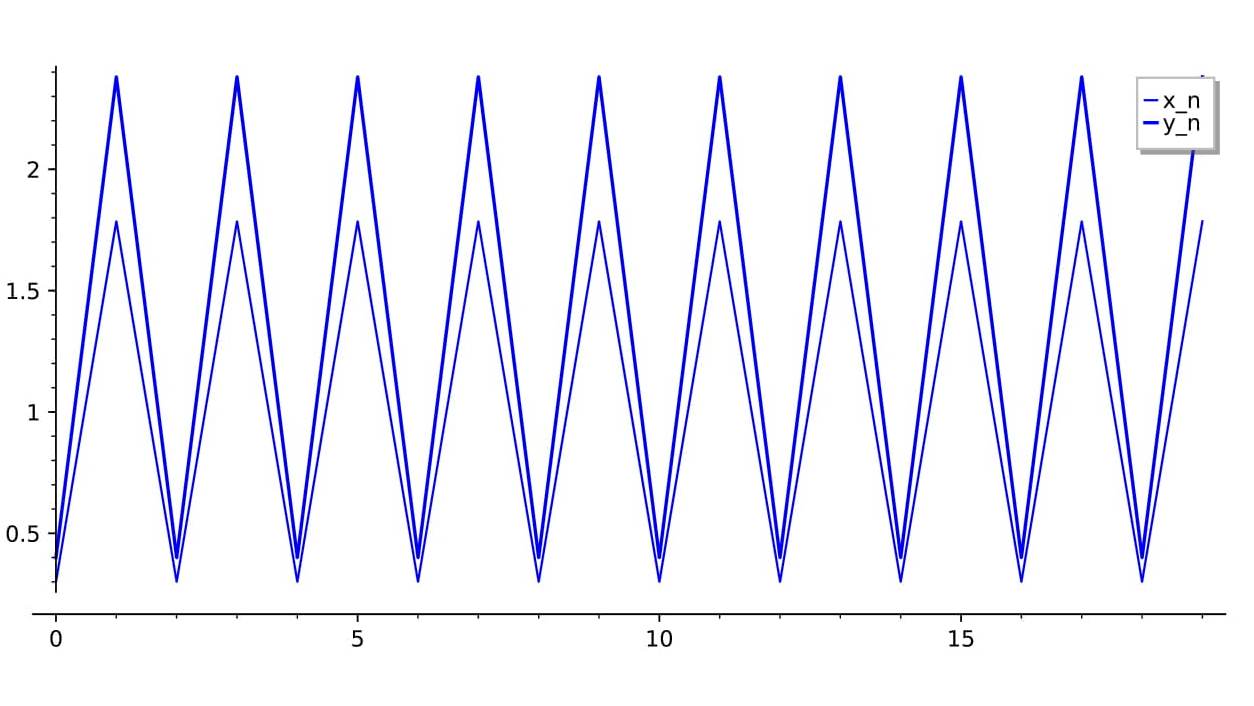}
        \caption{ {$a=, b=, c=, d=, x_{-3}=, x_{-2}=, x_{-1}=,  x_0=, y_{-3}=,\,y_{-2}=, y_{-1}=, y_0=$.}}
        \label{graph2}
    \end{minipage}\hfill
    %\bigskip
    \begin{minipage}{0.47\textwidth}
        \centering
        \includegraphics[width=1\linewidth, height=0.2\textheight]{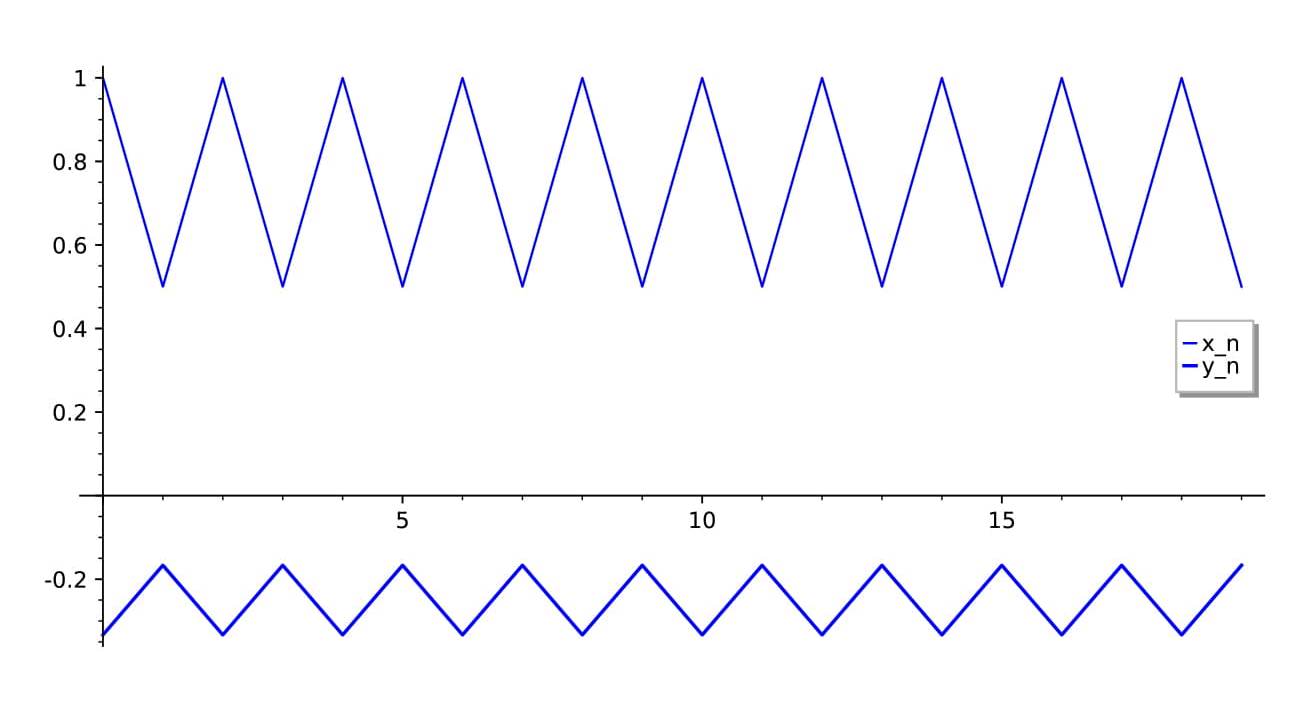}
        \caption{{$a=, b=, c=, d=, x_{-3}=, x_{-2}=, x_{-1}=,  x_0=, y_{-3}=,\,y_{-2}=, y_{-1}=, y_0=$.}}
        \label{graph22}
    \end{minipage}
\end{figure}
\begin{theorem}
If $x_0 = - x_{-2}, y_0 = -y_{-2}, a = c, b = -d$ and $x_{-1}y_{-2} = - x_{-2}y_{-1} = \frac{1 + a}{b}$, then the solution of the system $x_{n+1}=\frac{x_{n-2}y_{n-1}}{y_{n}(a + bx_{n-2}y_{n-1})}, \quad y_{n+1}=\frac{y_{n-2}x_{n-1}}{x_{n}(c + dy_{n-2}x_{n-1})}$ is periodic with period four.
\end{theorem}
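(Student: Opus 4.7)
My plan is to avoid substituting directly into the explicit product formulas (as the paper does in the proof of Theorem~4.1) and instead work with the auxiliary sequences $U_n$ and $V_n$. In the Section~3 indexing I set $U_n = 1/(x_{n-2}y_{n-1})$ and $V_n = 1/(x_{n-1}y_{n-2})$, so that \eqref{14} translates to $V_{n+2} = a_n U_n + b_n$ and $U_{n+2} = c_n V_n + d_n$, while a two-index shift of \eqref{18} gives $x_{n+2} = (U_{n+2}/V_{n+3})\,x_n$ and $y_{n+2} = (V_{n+2}/U_{n+3})\,y_n$. The strategy is to show that under the hypotheses $U_n$ and $V_n$ are in fact constant sequences, which forces $x_{n+2} = -x_n$ and $y_{n+2} = -y_n$, and hence period four.

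First I would feed the hypotheses into the initial terms. From $bx_{-1}y_{-2} = 1+a$ and $bx_{-2}y_{-1} = -(1+a)$ together with $y_0 = -y_{-2}$, $x_0 = -x_{-2}$ (which give $x_{-1}y_0 = -x_{-1}y_{-2}$ and $x_0 y_{-1} = -x_{-2}y_{-1}$), a direct check yields
\[
U_0 = U_1 = -\frac{b}{1+a},\qquad V_0 = V_1 = \frac{b}{1+a}.
\]
Next, under $a = c$ and $b = -d$ the recurrences specialize to $V_{n+2} = aU_n + b$ and $U_{n+2} = aV_n - b$. Plugging in the candidate constants gives $V_{n+2} = -\tfrac{ab}{1+a} + b = \tfrac{b}{1+a}$ and $U_{n+2} = \tfrac{ab}{1+a} - b = -\tfrac{b}{1+a}$, so by induction on $n$ one obtains $U_n \equiv -\tfrac{b}{1+a}$ and $V_n \equiv \tfrac{b}{1+a}$ for every $n \geq 0$.

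Substituting these constant values into the skip-two relations collapses each ratio $U_{n+2}/V_{n+3}$ and $V_{n+2}/U_{n+3}$ to $-1$, yielding $x_{n+2} = -x_n$ and $y_{n+2} = -y_n$; iterating once more produces $x_{n+4} = x_n$ and $y_{n+4} = y_n$, which is the desired 4-periodicity. The only point I expect to require care is the two-index bookkeeping between the Section~2 invariants (where $U_n = 1/(x_n y_{n+1})$) and the Section~3 ones, which must be tracked when quoting \eqref{18}; once that shift is handled, the entire argument reduces to the three elementary substitutions above and sidesteps the lengthy product manipulations that were needed for the 2-periodic case.
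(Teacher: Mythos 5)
Your proof is correct, and it takes a genuinely different route from the paper. The paper proves the theorem by substituting the hypotheses into the eight explicit product formulas of Section 4 (the constant-coefficient specializations of \eqref{21a0p}--\eqref{21b3p}), simplifying each product via identities such as $a^{2s+2}+(ab-b)x_{-1}y_0a^{2s}=a^{2s}$ to conclude $x_{4n+j}=x_j$, and then handling the $y$-components only by a ``similarly'' remark. You instead return to the reduced system: with the Section~3 indexing $U_n=1/(x_{n-2}y_{n-1})$, $V_n=1/(x_{n-1}y_{n-2})$, the hypotheses give $U_0=U_1=-b/(1+a)$, $V_0=V_1=b/(1+a)$, and since $a=c$, $d=-b$ turn \eqref{14} into $V_{n+2}=aU_n+b$, $U_{n+2}=aV_n-b$, a two-step induction (correctly seeded at both parities) shows $U_n$ and $V_n$ are constant; the shifted form of \eqref{18}, $x_{n+2}=(U_{n+2}/V_{n+3})x_n$ and $y_{n+2}=(V_{n+2}/U_{n+3})y_n$, then collapses to $x_{n+2}=-x_n$, $y_{n+2}=-y_n$, hence period four. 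Your index bookkeeping between the Section~2 and Section~3 conventions is right, and both hypotheses $x_0=-x_{-2}$, $y_0=-y_{-2}$ are genuinely used (to evaluate $V_1$ and $U_1$ respectively). What your argument buys is considerable: it treats $x$ and $y$ symmetrically rather than leaving half the cases to ``similarly,'' it avoids the heavy product manipulations entirely, and it establishes the sharper structural fact that the solution is anti-periodic of period two (i.e.\ $x_{n+2}=-x_n$), of which 4-periodicity is an immediate consequence. Like the paper's proof, it tacitly assumes the nondegeneracy conditions $b\neq0$, $a\neq-1$ and nonvanishing of the relevant products so that $U_n$, $V_n$ are defined, so no new hypotheses are introduced.
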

\begin{proof}
Under the given assumptions $x_0 = - x_{-2}, y_0 = -y_{-2}, a = c, b = -d$,  we have
\begingroup\makeatletter\def\f@size{9}\check@mathfonts
\def\maketag@@@#1{\hbox{\m@th\large\normalfont#1}}%
\begin{align}%\label{21a0}
x_{4n - 2} & = \frac{x_0^{n}y_0^{n}}{y_{-2}^nx_{-2}^{n - 1}}\prod_{s = 0}^{n - 1}\frac{(a)^{2s} + (ab - b)x_{-2}y_{-1}\sum\limits_{l = 0}^{s - 1}(a)^{2l}}{(a)^{2s} +(ab - b)x_{-2}y_{-1}\sum\limits_{l = 0}^{s - 1}(a)^{2l}}\,\,\frac{ (a^{2s + 1} + \left(-(a)^{2s}b + (ab -b)\sum\limits_{l = 0}^{s - 1}(a)^{2l}\right)y_{-2}x_{-1} } { (a^{2s + 1} + \left((a)^{2s}b + (-ab + b)\sum\limits_{l = 0}^{s - 1}(ac)^{l}\right)x_{-1}y_0} \nonumber\\
          &  = \frac{x_0^{n}y_0^{n}}{y_{-2}^nx_{-2}^{n - 1}}\prod_{s = 0}^{n - 1}\frac{ (a^{2s + 1} + \left(-(a)^{2s}b + (ab -b)\sum\limits_{l = 0}^{s - 1}(a)^{2l}\right)-(y_{0})x_{-1} } { (a^{2s + 1} + \left((a)^{2s}b + (-ab + b)\sum\limits_{l = 0}^{s - 1}(ac)^{l}\right)x_{-1}y_0} \nonumber\\
          & = x_{-2},\label{21a0}\\
%\end{align}
%\begin{align}\label{21a1}
x_{4n - 1}  & = \frac{x_{-1}x_{-2}^{n}y_{-2}^{n}}{x_0^{n}y_0^n}\prod_{s = 0}^{n - 1}\left(\frac{(a)^{2s} + (ab - b)x_{-1}y_0\sum\limits_{l = 0}^{s - 1}(ac)^{l}}{a^{2s + 1} + \left((a)^{2s}b + (-ab + b)\sum\limits_{l = 0}^{s- 1}(a)^{2l}\right)x_{-2}y_{-1}}\right.\nonumber\\
           &\left. \times \frac{a^{2s + 1}  + \left(-(a)^{2s}b + (ab - b)\sum\limits_{l = 0}^{s - 1}(a)^{2l}\right)x_0y_{-1}}{(a)^{2s+2} + (-ab + b)x_{-1}y_{-2}\sum\limits_{l = 0}^{s}(a)^{2l}} \right) \nonumber\\
           & =\frac{x_{-1}x_{-2}^{n}y_{-2}^{n}}{x_0^{n}y_0^n}\prod_{s = 0}^{n - 1}\frac{(a)^{2s} + (ab - b)x_{-1}y_0\sum\limits_{l = 0}^{s - 1}(ac)^{l}}{(a)^{2s+2} + (ab - b)x_{-1}y_{0}\sum\limits_{l = 0}^{s}(a)^{2l}}\label{21a1}
\end{align}
\endgroup
But $x_{-1}y_{-2} = \frac{1 + a}{b}$, i.e. $x_{-1}y_{0} = - \frac{(1 + a)(1 - a)}{b(1 - a)}$ implies that
$(a - 1)bx_{-1}y_{0} = (1 - a)(1 + a)$, i.e. $a^{2} + (ab -b)x_{-1}y_{0} = 1$ implying
$$ a^{2s + 2}  + (ab  - b)x_{-1}y_{0}a^{2s} = a^{2s}$$ so that
$$x_{4n - 1} = x_{-1}.$$
\begingroup\makeatletter\def\f@size{9}\check@mathfonts
\def\maketag@@@#1{\hbox{\m@th\large\normalfont#1}}%
\begin{align}%\label{21a2}
x_{4n} & = \frac{x_{0}^{n + 1}y_0^{n}}{x_{-2}^{n}y_{-2}^n}\prod_{s = 0}^{n - 1}\frac{a^{2s+1} + \left(-(a)^{2s}b + (ab -b)\sum\limits_{l = 0}^{s - 1}(a)^{2l}\right)x_{-1}(-y_{0})}{(a^{2s + 1} + \left((a)^{2s}b + (-ab + b)\sum\limits_{l = 0}^{s - 1}(a)^{2l}\right)x_{-1}y_0}\,\, \frac{(a)^{2s + 2} + (ab  - b)x_{-2}y_{-1}\sum\limits_{l = 0}^{s}(a)^{2l}}{(a)^{2s + 2} + (-ab + b)x_0y_{-1}\sum\limits_{l = 0}^{s}(a)^{2l}} \nonumber\\
      & = x_0,\label{21a2}\\
%\end{align}
%\begin{align}\label{21a3}
x_{4n + 1}  & =  \frac{x_{-2}^{n +1}y_{-2}^{n}y_{-1}}{x_0^{n}y_{0}^{n +1}(a + bx_{-2}y_{-1})}\prod_{s = 0}^{n - 1}\left(\frac{a^{2s+1} + \left(-(a)^{2s}b + (ab - b)\sum\limits_{l = 0}^{s - 1}(a)^{2l}\right)x_0y_{-1}}{(a)^{2s + 2} + (-ab + b)x_{-1}y_{-2}\sum\limits_{l = 0}^{s}(a)^{2l}}\right. \nonumber \\
           &\quad \quad \quad \quad \quad \quad \quad \quad \quad \quad \times \, \left.\frac{(a)^{2s + 2} + (ab - b)x_{-1}y_0\sum\limits_{l = 0}^{s}(a)^{2l}}{ a^{2s + 3} + \left((a)^{2s + 2}b + (-ab + b)\sum\limits_{l = 0}^{s}(a)^{2l}\right)x_{-2}y_{-1}}\right)\nonumber\\
           & = \frac{x_{-2}^{n +1}y_{-2}^{n}y_{-1}}{x_0^{n}y_{0}^{n +1}(a + bx_{-2}y_{-1})}\prod_{s = 0}^{n - 1}\left(\frac{a^{2s+1} + \left((a)^{2s}b + (-ab + b)\sum\limits_{l = 0}^{s - 1}(a)^{2l}\right)x_{-2}y_{-1}}{a^{2s + 3} + \left((a)^{2s + 2}b + (-ab + b)\sum\limits_{l = 0}^{s}(a)^{2l}\right)x_{-2}y_{-1}}\right).\label{21a3}
\end{align}
\endgroup
However, $-x_{-2}y_{-1} = \frac{1 + a}{b}$ implies that $a^{3} + (a^{2}b - ab + b)x_{-2}y_{-1} = a + bx_{-2}y_{-1}$ which yields
$$ a^{2s + 3} + (a^{2s + 2}b + (-ab + b)a^{2s})x_{-2}y_{-1} = a^{2s + 1} + a^{2s}bx_{-2}y_{-1}$$ so that
$$ x_{4n + 1} = x_{1}.$$
\noindent Similarly, one can show that $y_{4n + j} = y_{j}$ for all $n \geq 0$ and $j \geq -2$. Indeed, the solution under the given assumptions is periodic with period four.
\end{proof}
\noindent For illustration, we give numerical examples (See Figures \ref{graph4} and \ref{graph44}).
\begin{figure}[H]
    \centering
    \begin{minipage}{.47\textwidth}
        \centering
        \includegraphics[width=1\linewidth, height=0.2\textheight]{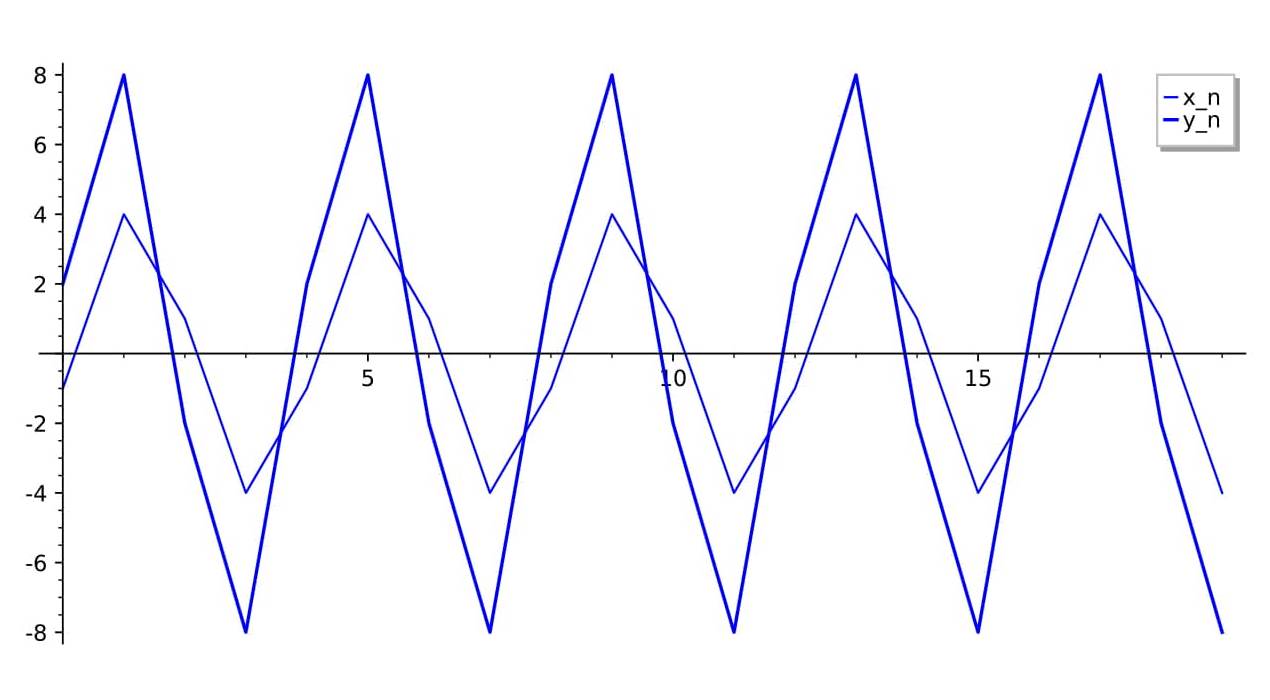}
        \caption{ {$a=, b=, c=, d=, x_{-3}=, x_{-2}=, x_{-1}=,  x_0=, y_{-3}=,\,y_{-2}=, y_{-1}=, y_0=$.}}
        \label{graph4}
    \end{minipage}\hfill
    %\bigskip
    \begin{minipage}{0.47\textwidth}
        \centering
        \includegraphics[width=1\linewidth, height=0.2\textheight]{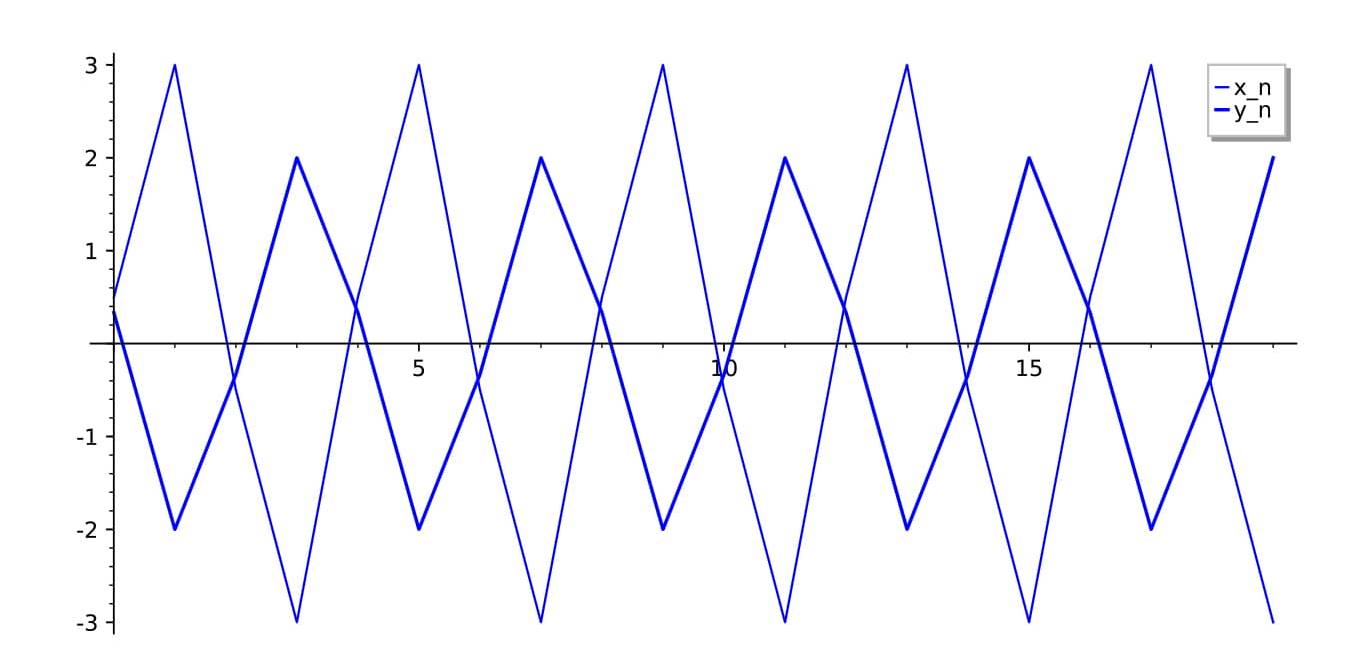}
        \caption{{$a=, b=, c=, d=, x_{-3}=, x_{-2}=, x_{-1}=,  x_0=, y_{-3}=,\,y_{-2}=, y_{-1}=, y_0=$.}}
        \label{graph44}
    \end{minipage}
\end{figure}
%\begin{figure}
%\centering
%\begin{minipage}{.5\textwidth}
%\centering
%\includegraphics[scale=0.1]{graph4.jpg}
%\captionof{figure}{$x_n = x-{n + 1}$}\label{fig1}
%\end{minipage}
%\begin{minipage}{.5\textwidth}
%\centering
%\includegraphics[scale=0.1]{graph44.jpg}
%\captionof{figure}{$x_n = x_{n + 2}$}\label{fig2}
%\end{minipage}
%\end{figure}
\begin{remark}
If $x_0 = - x_{-2}, y_0 = -y_{-2}, a = c = 1, d = -1$  then the solution of the system $x_{n+1}=\frac{x_{n-2}y_{n-1}}{y_{n}(a + bx_{n-2}y_{n-1})}, \quad y_{n+1}=\frac{y_{n-2}x_{n-1}}{x_{n}(c + dy_{n-2}x_{n-1})}$ is periodic with period four. The condition $x_{-1}y_{-2} = \frac{1 + a}{b}$ is not needed. This is clearly seen from the form of solution where one replaces $ab - b = 0$. This is the case of Theorem 18 of Elsayed \cite{EI}.
\end{remark}
%\end{proof}
\section{Conclusion}
We derived the Lie point symmetries of the difference equation \eqref{1.2}. The higher order equations were reduced to lower order equations, and via iterations, we were able to obtain solutions of the system of difference equations \eqref{1.1} in explicit form. The results found in this paper not only generalize solutions found by Elsayed and Ibrahim in \cite{EI}, but also greatly simplify the solutions in Theorems 5, 6, 7, 8, 9, 10, 11 and 12 in the same paper, by using only 8 equations instead of 16 equations.

\end{document}